\numberwithin{theorem}{section}
\newcommand{\TheTitle}{A Mixed Approach to the Poisson Problem with Line Sources} 
\newcommand{\TheAuthors}{I. G. Gjerde,K. Kumar, J. M. Nordbotten}
\headers{\TheTitle}{\TheAuthors}
\title{{\TheTitle} \thanks{This work has been supported by the Research Council of Norway, project number 250223. 
}}
\author{Ingeborg G. Gjerde\thanks{Department of Mathematics, University of Bergen, Norway. \email{(ingeborg.gjerde@uib.no)}}
\and
Kundan Kumar \thanks{Department of Mathematics, Karlstad University, Sweden.  \email{(kundan.kumar@karlstad.se)}}
\and Jan M. Nordbotten \thanks{Department of Mathematics, University of Bergen, Norway. \email{(jan.nordbotten@math.uib.no)}}}
\newcommand{\norm}[2]{ \Vert #1 \Vert_{#2}}
\begin{document}

\maketitle

\begin{abstract}
In this work we consider the primal mixed variational formulation of the Poisson equation with a line source. 
The analysis and approximation of this problem is non-standard as the line source causes the solutions to be singular. We start by showing that this problem admits a solution in appropriately weighted Sobolev spaces. Next, we show that given some assumptions on the problem parameters, the solution admits a splitting into higher and lower regularity terms. The lower regularity terms are here explicitly known and capture the solution singularities. The higher regularity terms, meanwhile, are defined as the solution of its own mixed Poisson equation. With the solution splitting in hand, we then define a singularity removal based mixed finite element method in which only the higher regularity are approximated numerically. This method yields a significant improvement in the convergence rate when compared to approximating the full solution. In particular, we show that the singularity removal based method yields optimal convergence rates for lowest order Raviart-Thomas and discontinuous Lagrange elements.
\end{abstract}

\begin{keywords}
Singular elliptic equations, finite-elements
\end{keywords}

\begin{AMS}
35J75,  	65M60
\end{AMS}

\section{Introduction}
Let $\Omega \subset \mathbb{R}^3$ be a 3D domain with smooth boundary $\partial \Omega$. Let $\Lambda$ be a smooth 1D curve with the parametrization $\boldsymbol{\lambda} = [ \xi(s), \tau(s), \zeta(s)]$ so that $\Lambda =\{ \boldsymbol{\lambda}(s) \in (0,L) \} \subset \mathbb{R}^1 \subset \Omega$. For simplicity, we assume $\Vert \boldsymbol{\lambda}'(s) \Vert=1$ so that the arc-length and coordinate $s$ coincide. We consider in this work the mixed Poisson problem with a line source on $\Lambda$: Find $u$ and $\mathbf{q}$ solving
\begin{subequations}
\begin{align}
\mathbf{q} + \kappa \nabla u &= 0 && \text{ in } \Omega, \label{eq:darcy-strong} \\
\nabla \cdot \mathbf{q} &= f \,   \delta_\Lambda && \text{ in } \Omega, \label{eq:cons-strong}\\
u &= u_0 \label{eq:bc-strong} && \text{ on } \partial \Omega,
\end{align}
\end{subequations}
where $f \in C^0(\bar{\Omega})$ denotes the line source intensity, $\kappa \in L^{\infty}(\Omega)$ a strictly positive permeability,$u_0 \in C^2(\bar{\Omega})$ the boundary data and $\delta_\Lambda$ a Dirac line source concentrated on $\Lambda$. The line source $\delta_\Lambda$ is taken as the limit of a sequence of nascent Dirac functions of unit measure per arc length:
\begin{align}
\delta_\Lambda = \lim_{\epsilon\rightarrow0} \delta_\Lambda^\epsilon, \quad \delta_\Lambda^\epsilon = \begin{cases}  \frac{1}{\pi \epsilon^2} &\text{ for }  r\leq \epsilon, \\ 0 &\text{ otherwise},\end{cases} \label{eq:dirac-limit}
\end{align}
where $r = \text{dist}(\mathbf{x}, \Lambda)$.
\begin{figure}
	\centering
	\includegraphics[width=1.6in]{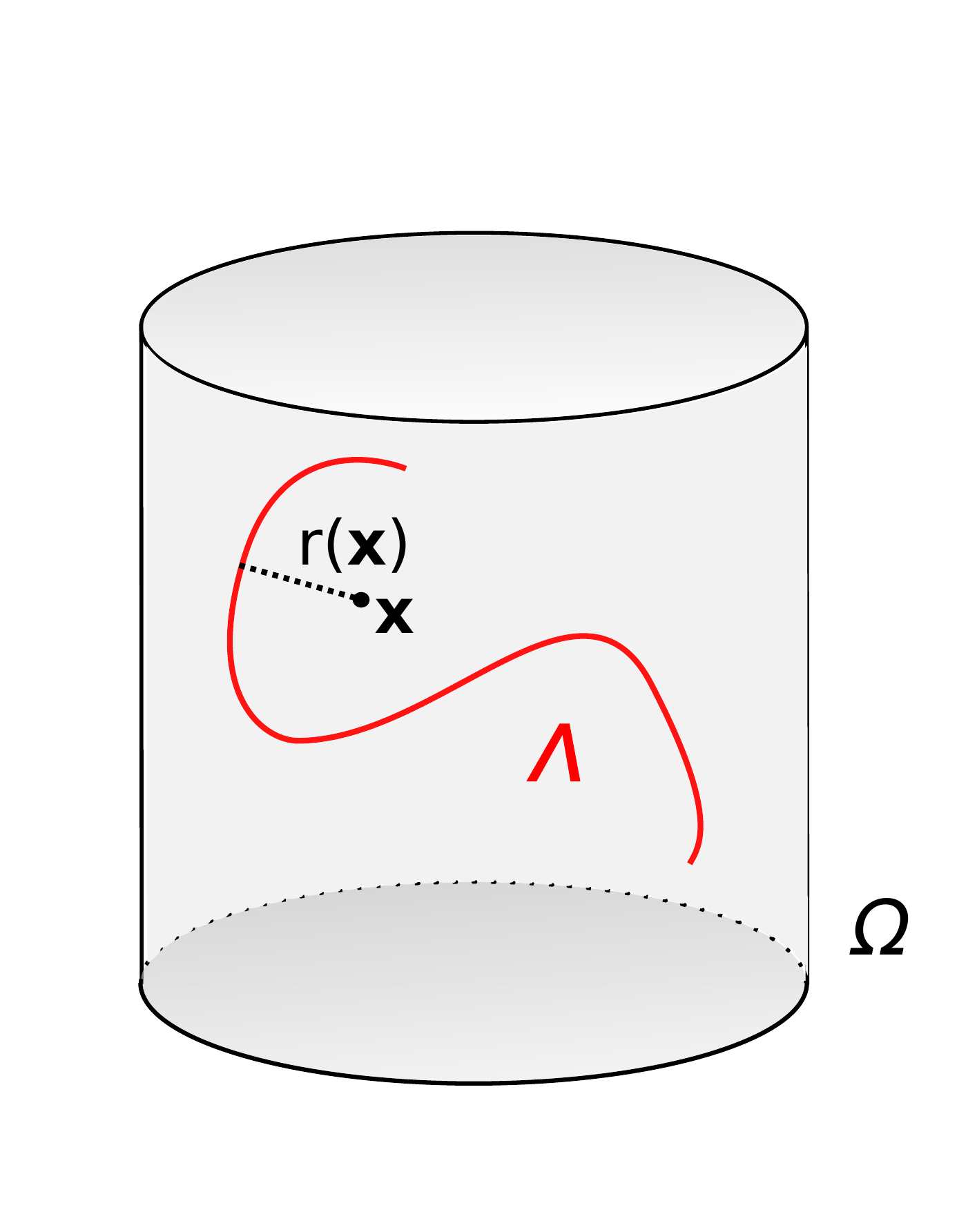}
	\caption{A 3D domain $\Omega \subset \mathbb{R}^3$ with an embedded line $\Lambda$, with $r(\mathbf{x})$ denoting the distance of a point in $\Omega$ to the line.}
	\label{fig:illustration}
\end{figure}

Models of the type \eqref{eq:darcy-strong}-\eqref{eq:bc-strong} arise in a variety of applications. In geophysics, line sources have been used to model 1D steel components in concrete  structures \cite{llau2016} and the interference of metallic pipelines and bore-casings in electromagnetic modelling of reservoirs \cite{weiss2017}. In the context of geothermal energy, line sources have been used to model the heat exchange between a well and the surrounding soil \cite{al-khoury2005}. In reservoir engineering, coupled 1D-3D flow models (where \eqref{eq:darcy-strong}-\eqref{eq:bc-strong} is coupled to a 1D flow equation on $\Lambda$) are used to model the flow between a well and reservoir \cite{Zunino-well, Wolfsteiner2003, aavatsmark2003index}. The same model has also been considered in the context of biological systems, where it has been used to model water flow through a root system \cite{root1,stuttgart-root}, blood and oxygen transport through the vascularized tissue of the brain \cite{Reichold2009, Grinberg2011, vidotto2018, Fang2008, Linninger2013}, the efficiency of cancer treatment by hyperthermia \cite{nabil2016}, and the efficiency of drug delivery through microcirculation \cite{Cattaneo2014, zunino2018}. 

In many of these applications, the flow equation \eqref{eq:darcy-strong}-\eqref{eq:bc-strong} will be coupled to a transport equation (describing for example heat flow or the concentration of some chemical). For this reason, we consider herein the mixed variational formulation of \eqref{eq:darcy-strong}-\eqref{eq:cons-strong}. Upon discretization, this will yield a mixed finite element method, which is known to provide good approximations of the velocity field. In particular, it provides locally conservative approximations. 

As we will see, the analysis and approximation of \eqref{eq:darcy-strong}-\eqref{eq:cons-strong} is non-standard as the line source $\delta_\Lambda$ induces the solution to be singular. To be more precise, one has that $\delta_\Lambda$ induces a logarithmic type singularity in $u$ and a $r^{-1}$-type singularity in $\mathbf{q}$. Consequently, one has $\mathbf{q} \, \cancel{\in} \, L^2(\Omega)$. The analysis of \eqref{eq:darcy-strong}-\eqref{eq:cons-strong} therefore requires non-standard Sobolev spaces. From a numerical perspective, the singular nature of $u$ and $\mathbf{q}$ make them challenging to approximate. 

In this work, we (i) prove the existence of a solution to \eqref{eq:darcy-strong}-\eqref{eq:cons-strong} and (ii) construct an efficient numerical method with which to approximate it. The existence of a solution is proved using suitably weighted Sobolev space, similar to the ones used in \cite{arbogast2016, westphal2008}. With these spaces, the proof follows by the generalized Lax-Milgram theorem together with a limit argument. As we will see, the analysis raises questions regarding the approximation properties of $\mathbf{q}$. For this reason, we extend our work from \cite{Gjerde2018} to show that with some assumptions on the problem parameters, the solution admits a splitting of the type
\begin{align}
u = u_s + u_r, \quad \mathbf{q} = \mathbf{q}_s + \mathbf{q}_r, \label{eq:splitting-intro}
\end{align}
where $u_s$ and $\mathbf{q}_s$ denote explicitly known terms capturing the solution singularities, and $u_r$ and $\mathbf{q}_r$ denote higher regularity remainder terms. The remainder terms are defined as the solution of their own mixed Poisson equation. With the splitting in hand, we then formulate a solution strategy in which only $u_r$ and $\mathbf{q}_r$ are approximated using a mixed finite element method. In contrast to the development in \cite{Gjerde2018}, this has the advantage of providing a locally mass conservative approximation. The full solution pair $(u, \mathbf{q})$ can then be reconstructed using \eqref{eq:splitting-intro}. We will refer to this as the singularity removal based mixed finite element method.  

\subsection{Relevant literature and our contribution}
Several authors have contributed to the analysis of the Poisson equation with a line source. Of special relevance to our work, we mention the work of D'Angelo and Quarteroni in \cite{dangelo2008}, where they proved the existence of a solution to the primal (non-mixed) variational formulation of \eqref{eq:darcy-strong}-\eqref{eq:bc-strong}. The proof relied on weighted Sobolev spaces similar to those known from the study of corner-point problems \cite{babuska1972}. In \cite{dangelo2012}, D'Angelo went on to study the finite element approximation of the problem. There, he found that the approximation converges sub-optimally in the $L^2$-norm, and fails to converge in the $H^1$-norm. Convergence can be improved by weighing the error-norm, and optimal convergence rates can be retrieved by grading the mesh, i.e. by performing a particular refinement around the singularity. A similar result is known for the point source problem in 2D \cite{Ding2001, apel2011}. Köppl et al. proved that the convergence issues are local to the singularity around the line source \cite{koppl2015}. For the coupled 1D-3D flow model, this means that the numerical approximation will suffer pollution until the mesh size $h$ is smaller than $R$ ($R$ being the original radius of e.g. blood vessel or well). We show in \cite{Gjerde2018-2} that the FE approximation of the coupled 1D-3D flow therefore requires a very fine mesh to converge. 

Several strategies have been proposed in order to deal with this issue regarding computational complexity. We refer to the work of Kuchta et al. in \cite{miro2016-2D1D, baerland2018} for suitable preconditioners for the coupled 1D-3D problem. Holter et al. then applied this preconditioner to simulate flow through the microcirculature found in a mouse brain \cite{Holter2018}. Koch et al. introduced a smoothing kernel to distribute the line source over a 3D subdomain \cite{Koch2019}. An alternative coupling scheme was introduced  by Köppl et al. in \cite{koppl2016, koppl2017}, where the source term was taken to live on the boundary of the inclusions. This idea was further developed in \cite{Zunino-well, Laurino2019}. The result is a 1D-(2D)-3D method where the dimensional gap has been reduced to 1, thus improving the approximation properties of the solution.

The work cited so far has all been on the primal variational formulation of \eqref{eq:darcy-strong}-\eqref{eq:bc-strong}. Comparatively little work has been done that considers its mixed formulation (as an exception, we note the work of Notaro et al. in providing a mixed FE discretization of the coupled 1D-3D flow model \cite{MFEM-linesource}). A mathematical analysis of \eqref{eq:darcy-strong}-\eqref{eq:bc-strong} is, however, to the best of our knowledge, still missing. As is the construction of a suitable numerical method with which to approximate the solution. The aim of this article is to fill this gap.

\subsection{Overview of the paper}
We start in Section \ref{sec:notation} by introducing the weighted Sobolev spaces. With these in hand, we then prove in Section \ref{sec:existence} the existence of a solution to the primal mixed variational formulation of \eqref{eq:darcy-strong}-\eqref{eq:bc-strong}. The solution is shown to exist in a highly non-standard space with poor approximation properties. For this reason, we proceed in Section \ref{sec:decomposition} to construct a solution splitting of the type \eqref{eq:splitting-intro}, where the solution is split into higher and lower regularity terms. In Section \ref{sec:disc} we give the mixed finite element discretization of the problem. Here, we provide two different methods: the standard mixed finite element method that approximates the full solution pair $(u, \mathbf{q})$, and a singularity removal based finite element method that approximates only the higher-regularity remainder pair $(u_r, \mathbf{q}_r)$. In Section \ref{sec:numerics-weighted}, we provide numerical evidence that the former method fails to converge
in the standard $L^2$-norm. In Section \ref{sec:numerics-reformulated}, we then show that the latter method, i.e., solving for the remainder pair $(u_r, \mathbf{q}_r)$, yields significantly improved convergence rates. We conclude by showing the 
the results of applying the singularity removal based mixed finite element method on a non-trivial geometry taken from the vascular network of a rat tumour.

\section{Function spaces and notation}
\label{sec:notation}
The purpose of this section is to introduce the
weighted Sobolev spaces in which solutions to \eqref{eq:darcy-strong}-\eqref{eq:bc-strong} belong. We start by giving the definition of the standard Sobolev spaces. Let $\mathrm{d}x$ denote the standard Lebesgue measure in $\mathbb{R}^3$, $\sigma$ the $\sigma$-algebra on $\Omega$ and $(\Omega, \sigma, \mathrm{d}x)$ the usual Lebesgue measure space. Letting $L^2(\Omega)$ denote the space of square integrable functions on $(\Omega, \sigma, \mathrm{d}x)$, the Sobolev space $H^m(\Omega)$ can be defined as
\begin{align*}
H^m(\Omega) = \lbrace D^\beta u \in L^2(\Omega) \text{ for all } \, \vert \beta \vert \leq m  \rbrace,
\end{align*}
equipped with the inner product
\begin{align*}
(u,v)_{ H^m(\Omega)} = \sum_{\vert \beta \vert \leq m} (D^\beta u, D^\beta v),
\end{align*}
where $(\cdot, \cdot)$ denotes the $L^2$-inner product $(u, v)_\Omega = \int_\Omega uv \, \mathrm{d}x$, $\beta$ is a multi-index and $D^\beta u$ denotes the corresponding distributional partial derivative of $u$. A subscript $H^m_0(\Omega)$ is used to denote the subspace of $H^m(\Omega)$ with zero trace on the boundary. Next, let $H(\mathrm{div}; \Omega)$ be given as
\begin{align*}
H(\text{div}; \Omega) = \{ \mathbf{q} \in (L^2(\Omega))^3 : \nabla \cdot \mathbf{q} \in L^2(\Omega)\},
\end{align*}
equipped with inner product
\begin{align*}
(\mathbf{q}, \mathbf{v})_{H(\text{div};\Omega)} = (\mathbf{q}, \mathbf{v}) + (\nabla \cdot \mathbf{q}, \nabla \cdot \mathbf{v}).
\end{align*} 

Let $r(\mathbf{x}) = \text{dist}(\mathbf{x}, \Lambda)$ denote the distance of a point $\mathbf{x} \in \Omega$ to $\Lambda$. As we will see in Section \ref{sec:decomposition}, the line source $\delta_\Lambda$ introduces a $r^{-1}$-type singularity in $\mathbf{q}$. For this reason, $\mathbf{q}$ fails to belong to $L^2(\Omega)$; consequently, it also fails to belong to the standard $H(\mathrm{div}; \Omega)$-space. As we will see, the solution $\mathbf{q}$ will instead belong to a weighted $H$-div space. Let $\alpha \in \mathbb{R}$ and take $L^2_\alpha(\Omega)$ to denote the weighted space
\begin{align*}
L_\alpha^2(\Omega) := \left \{u \text{ measurable }: \int_\Omega \left( r^\alpha u \right)^2 \mathrm{d} x < \infty \right \}.
\end{align*}
This is a Hilbert space equipped with the scalar product 
\begin{align*}
(u, v)_{L_\alpha^2(\Omega)} = \int_\Omega r^{2\alpha} u v \, \mathrm{d} x.
\end{align*}
\textcolor{black}{Formally, the value of $\alpha$ controls how singular the function is allowed to be. Increasing $\alpha$ leads to an increase in the space $L^2_\alpha(\Omega)$; i.e. 
letting $\alpha_1<\alpha_2$, one has $L^2_{\alpha_1}(\Omega) \subset L^2_{\alpha_2}(\Omega)$.} Next, by an application of Cauchy-Schwarz, we obtain 
\begin{align}
\vert (u, v) \vert = \vert (r^\alpha u, r^{-\alpha}v) \vert \leq \norm{u}{L^2_{\alpha}(\Omega)}  \norm{v}{L^2_{-\alpha}(\Omega)}  \quad \forall \, u \in L^2_\alpha(\Omega), \, v \in L^2_{-\alpha}(\Omega),
\label{eq:weighted-cauchy-schwarz}
\end{align}
\textcolor{black}{meaning that the spaces $L^2_\alpha(\Omega)$ and $L^2_{-\alpha}(\Omega)$ are dual to each other.}

For $\alpha \in (-1,1)$, the weights $r^\alpha$ are said to be \textit{Muckenhoupt}, and we have the imbedding $L^2_{\alpha}(\Omega) \hookrightarrow L^1(\Omega)$ \cite{Turesson2000}. The space $L^2_\alpha(\mathrm{div}; \Omega)$ then admits properties such as density of smooth functions $C^\infty_0(\Omega; \mathrm{d}x)$. For general $\alpha$, the properties of $L^2_\alpha$ are best understood in the context of measure theory. Let $\mathrm{d}\mu = r(x)^\alpha \mathrm{d} x$; this defines a measure for all $\alpha \in \mathbb{R}$ and the triple $(\Omega, \sigma, \mathrm{d} \mu)$ constitutes a measure space. The space $L^2_\alpha(\Omega)$ can then equivalently be defined as the $L^2$-space on $(\Omega, \sigma, \mathrm{d} \mu)$:
\begin{align*}
L^2(\Omega; \mathrm{d} \mu) = \{ u \text{ measurable } : \int_\Omega u^2 \mathrm{d} \mu  < \infty \}.
\end{align*} 
Thus, $L^2_\alpha(\Omega)$ admits the standard properties of $L^2$-spaces with respect to $(\Omega, \sigma, \mathrm{d} \mu)$, such as density of smooth functions $C^\infty_0(\Omega; \mathrm{d}\mu)$.
In particular, it is complete \cite[Thm. 13.11]{real-analysis}. 

Let $H_{\alpha}^1(\Omega)$ denote the space \cite{kufner, koslov, oleinik, kilpelainen1994}:
\begin{align*}
H_{\alpha}^1(\Omega) = \lbrace u \in L^2_\alpha(\Omega) : \nabla u \in (L^2_{\alpha}(\Omega))^3 \rbrace. 
\end{align*}
This is a Hilbert space equipped with inner product
\begin{align*}
(u,v)_{ H_{\alpha}^1(\Omega)} = (u,v)_{L^2_\alpha(\Omega)} + (\nabla u,\nabla v)_{L^2_{\alpha}(\Omega)}.
\end{align*}
and is a Sobolev space in the sense that $r^\alpha u \in H^1(\Omega)$. The space $H^1_\alpha(\Omega)$ is often referred to as a non-homogeneous weighted Sobolev space, as the weight is not adjusted to compensate for the regularity lost when taking a derivative. In this work, we shall work mainly with homogeneous weighted Sobolev spaces of the type
\begin{align*}
V_{\alpha}^1(\Omega) = \lbrace u \in L^2_{\alpha-1}(\Omega) : \nabla u \in (L^2_{\alpha}(\Omega))^3 \rbrace,
\end{align*}
The $H_{\alpha}^1(\Omega)$ and $V_{\alpha}^1(\Omega)$ norms are equivalent; this follows from the following inequality \cite{babuska1972}:
\begin{align}
\norm{u}{L^2_{\alpha-1}(\Omega)} \leq C_\alpha \norm{u}{H^1_{\alpha}(\Omega)}, \label{eq:imbedding}
\end{align}
The properties of the spaces $H_{\alpha}^1(\Omega)$ and $V_{\alpha}^1(\Omega)$ depend on the choice of weights. For $\alpha \in (-1,1)$, the weights used in the space $H_{\alpha}^1(\Omega)$ are both Muckenhoupt. One then has density of smooth functions and the imbedding $L^1(\Omega) \subset H^1_\alpha(\Omega)$. By equivalence of norms, the same holds for the space $V_{\alpha}(\Omega)$.

Finally, let us define the weighted $H$-div type space $V_{\alpha+1}(\mathrm{div}; \Omega)$:
\begin{align*}
V_{\alpha+1}(\text{div}; \Omega) = \lbrace \mathbf{q} \in (L^2_\alpha(\Omega))^3 : \nabla \cdot \mathbf{q} \in L^2_{\alpha+1}(\Omega) \rbrace. 
\end{align*}
This is a Hilbert space equipped with the inner product
\begin{align*}
(\mathbf{q}, \mathbf{v})_{ V_{\alpha+1}(\Omega; \text{div})} = (\mathbf{q},\mathbf{v})_{L^2_\alpha(\Omega)} + (\nabla \cdot \mathbf{q},\nabla \cdot \mathbf{v})_{L^2_{\alpha+1}(\Omega)}.
\end{align*}
Note that elements of this space have a weak divergence $\nabla \cdot \mathbf{q} \in L_{\alpha+1}(\Omega)$, which is non-Muckenhoupt for $\alpha>0$. Consequently, the weak divergence of functions in $V_{\alpha+1}(\mathrm{div}; \Omega)$ may not belong to $L^1(\Omega)$.

\section{Existence of a Solution}
\label{sec:existence}
In the previous section, we gave the definition of the weighted Sobolev spaces. With these at our disposal, we are now ready to give the variational formulation of \eqref{eq:darcy-strong}-\eqref{eq:bc-strong}:
Find $(u, \mathbf{q}) \in L^2_{\alpha-1}(\Omega) \times V_{\alpha+1}(\mathrm{div}; \Omega) $ such that
\begin{subequations}
\begin{align}
 (\kappa^{-1} \mathbf{q} , \mathbf{v}) - ( u, \nabla \cdot \mathbf{v}) + (u_0, \mathbf{v}\cdot \mathbf{n})_{\partial \Omega} &= 0     &&  \forall \, \mathbf{v} \in V_{-\alpha+1}(\mathrm{div}; \Omega), \label{eq:darcy}\\
  (\nabla \cdot \mathbf{q}, \theta) &= (f \delta_\Lambda, \theta) && \forall \,  \theta \in L^2_{-\alpha-1}(\Omega), \label{eq:cons} 
\end{align}
\end{subequations} 
where $\mathbf{n}$ is the boundary unit normal of $\partial \Omega$. The solution space is chosen so that $r^{\alpha-1} p \in L^2(\Omega)$, $r^{\alpha} \mathbf{q} \in (L^2(\Omega)^3)$ and $r^{\alpha+1} \nabla\cdot \mathbf{q} \in L^2(\Omega)$, where the weighing is increased to account for the regularity loss caused by taking a derivative. This ensures that the velocity and pressure  spaces are sufficiently large to capture the expected structure of the solution, while selecting the largest test spaces admissible with respect to the bilinear forms appearing in the variational formulation. 
The main result of this section is the following existence theorem:
\begin{theorem}
\label{thm:primal}
Let $\Omega \subset \mathbb{R}^3$ be an open 3D domain with smooth boundary $\partial \Omega$, $\Lambda =\cup_{i=1}^n \Lambda_i$ be a collection of smooth, finite-length 1D curves $\Lambda_i \subset \mathbb{R}^1 \subset \Omega$, boundary data $u_0 \in C^2(\bar{\Omega})$, $f \in C^0(\bar{\Omega})$ and $\kappa \in L^{\infty}(\Omega)$ be strictly positive. For $\alpha>0$, there then exists $(u, \mathbf{q}) \in L^2_{\alpha-1}(\Omega) \times V_{\alpha+1}(\mathrm{div}; \Omega)$ solving \eqref{eq:darcy}-\eqref{eq:cons}.
\end{theorem}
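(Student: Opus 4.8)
The plan is to remove the Dirac line source by regularization, to solve the resulting family of non-singular mixed problems, to establish a priori bounds uniform in the regularization parameter, and finally to extract a weak limit. I replace $\delta_\Lambda$ by the nascent approximation $\delta_\Lambda^\epsilon$ of \eqref{eq:dirac-limit}. For each fixed $\epsilon>0$ the datum $f\delta_\Lambda^\epsilon$ is bounded, so the regularized analogue of \eqref{eq:darcy}--\eqref{eq:cons} is a non-singular mixed Poisson problem; it is well-posed in the standard spaces $L^2(\Omega)\times H(\mathrm{div};\Omega)$ by the generalized Lax--Milgram (Banach--Ne\v{c}as--Babu\v{s}ka) theorem, whose boundedness hypothesis is immediate from the weighted Cauchy--Schwarz inequality \eqref{eq:weighted-cauchy-schwarz} and whose inf--sup and non-degeneracy conditions are the classical ones for the mixed Laplacian. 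This produces a family $(u^\epsilon,\mathbf{q}^\epsilon)$, and the whole argument then rests on showing that it lies in the weighted spaces with a bound independent of $\epsilon$.

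This uniform bound, $\norm{u^\epsilon}{L^2_{\alpha-1}(\Omega)}+\norm{\mathbf{q}^\epsilon}{V_{\alpha+1}(\mathrm{div};\Omega)}\le C$, is the heart of the proof. It is delicate for two reasons: the $r^{-1}$-type singularity means $\mathbf{q}^\epsilon$ is not controlled by its divergence, and since the trial and test weights differ one cannot test the equations against the solution itself. I would argue by duality. For data dual to the two solution norms I solve the transposed problem, posed with the weights of \eqref{eq:darcy}--\eqref{eq:cons} interchanged. In these flipped spaces the singular homogeneous fields (the $r^{-1}$ divergence-free velocities and their logarithmic potentials) are no longer admissible, so the adjoint inf--sup condition does hold; it reduces, through the weighted Babu\v{s}ka--Brezzi conditions, to surjectivity of the weighted divergence together with the coercivity furnished by testing a velocity against a suitably weighted multiple of itself. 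The weighted right inverse of the divergence is obtained by transporting the classical Bogovski\u{\i} operator through the weight, which is legitimate in the Muckenhoupt range $\alpha\in(0,1)$; for $\alpha\ge1$ one falls back on the measure-theoretic density of smooth functions recorded in Section~\ref{sec:notation}.

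Pairing the regularized equations with the adjoint solution collapses the duality product to $(f\delta_\Lambda^\epsilon,\psi)$ plus the fixed boundary term, where $\psi$ is the adjoint pressure, and the characteristic difficulty becomes bounding this source contribution uniformly in $\epsilon$. A direct estimate in $L^2_{\alpha+1}(\Omega)$ degenerates as $\epsilon\to0$, so instead I use that the adjoint problem has smooth coefficients and Muckenhoupt weights: elliptic regularity places $\psi$ in a weighted $H^1$-type space admitting a trace on the codimension-two set $\Lambda$, and a D'Angelo-type weighted trace inequality $\norm{\tr{\psi}}{L^2(\Lambda)}\le C\norm{\psi}{V^1_{-\alpha}(\Omega)}$ gives $(f\delta_\Lambda^\epsilon,\psi)\to\int_\Lambda f\,\tr{\psi}\,\mathrm{d}s$ with an $\epsilon$-uniform bound that encodes the flux carried by the source. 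Securing this trace estimate, and the adjoint solvability underpinning it, is where I expect the real work to lie.

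With the uniform bound in hand, reflexivity of the weighted Hilbert spaces yields a subsequence $(u^\epsilon,\mathbf{q}^\epsilon)\rightharpoonup(u,\mathbf{q})$ in $L^2_{\alpha-1}(\Omega)\times V_{\alpha+1}(\mathrm{div};\Omega)$. Passing to the limit in the bilinear forms is routine by weak convergence and \eqref{eq:weighted-cauchy-schwarz}, while on the right-hand side I use that $(f\delta_\Lambda^\epsilon,\theta)\to(f\delta_\Lambda,\theta)$ for each fixed $\theta\in L^2_{-\alpha-1}(\Omega)$, so that the limit $(u,\mathbf{q})$ solves \eqref{eq:darcy}--\eqref{eq:cons}. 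The passage from a single curve to the finite union $\Lambda=\cup_{i}\Lambda_i$ is then immediate by linearity and the locality of the singularities.
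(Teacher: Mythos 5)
Your route---regularize the source, solve the non-singular problems in standard spaces, prove $\epsilon$-uniform bounds in the weighted spaces, then extract a weak limit---is genuinely different from the paper's. The paper never regularizes the problem itself: it proves well-posedness of the singular problem directly via the BNB theorem in the weighted spaces (Lemma \ref{lemma:primal}, valid for any datum $g \in L^2_{\alpha+1}(\Omega)$ and any $\alpha\in\mathbb{R}$), and uses a limit argument only at the level of the \emph{data}, showing that the nascent sequence in \eqref{eq:dirac-limit} is Cauchy in the complete space $L^2_{\alpha+1}(\Omega)$, hence $\delta_\Lambda \in L^2_{\alpha+1}(\Omega)$ (Lemma \ref{lemma:dirac}); Theorem \ref{thm:primal} then follows immediately. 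Measured against this, your proposal has a genuine gap at its center: the $\epsilon$-uniform a priori bound, which you yourself call the heart of the proof, is not established but deferred to ``adjoint solvability'' in the flipped-weight spaces plus a weighted trace inequality. Solvability of the mixed problem with the weights interchanged is \emph{exactly} Lemma \ref{lemma:primal} with $\alpha$ replaced by $-\alpha$ (the paper verifies \eqref{eq:weak-coercivity2} and the second inf-sup condition \eqref{eq:infsup} precisely by this $\alpha \mapsto -\alpha$ symmetry). So the regularization detour does not bypass the core technical work; it presupposes the very lemma it was meant to avoid and then stacks a compactness argument on top. As written, the theorem is therefore not proved.

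The second gap concerns the mechanism you propose for the source term, which rests on a misconception about the weighted test space. The adjoint pressure $\psi$ in your duality argument must satisfy $\norm{\psi}{L^2_{-\alpha-1}(\Omega)} < \infty$, which for $\alpha>0$ forces $\psi$ to degenerate at $\Lambda$ (otherwise $\int_\Omega r^{-2\alpha-2}\psi^2\,\mathrm{d}x$ diverges); no nonzero line flux $\int_\Lambda f\,\tr{\psi}\,\mathrm{d}s$ can survive in this pairing. Indeed, Cauchy--Schwarz gives directly $\vert (f\delta_\Lambda^\epsilon, \psi)\vert \le \frac{1}{\pi\epsilon^2}\norm{f}{L^\infty(\Omega)} \norm{\psi}{L^2_{-\alpha-1}(\Omega)} \bigl(\int_{r<\epsilon} r^{2\alpha+2}\,\mathrm{d}x\bigr)^{1/2} \le C\,\epsilon^{\alpha}\norm{\psi}{L^2_{-\alpha-1}(\Omega)}$, so this contribution is uniformly bounded (in fact vanishing) with no trace inequality at all: the D'Angelo-type trace machinery you flag as ``the real work'' is both unavailable and unnecessary here. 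This same computation, organized as a Cauchy-sequence argument in cylindrical coordinates along the Frenet frame, is precisely the paper's Lemma \ref{lemma:dirac}---and it is also what you need, but do not supply, in your final step: the asserted convergence $(f\delta_\Lambda^\epsilon,\theta)\to(f\delta_\Lambda,\theta)$ for $\theta\in L^2_{-\alpha-1}(\Omega)$ has no meaning until $\delta_\Lambda$ is realized as an element of $L^2_{\alpha+1}(\Omega)$, i.e.\ until Lemma \ref{lemma:dirac} (or an equivalent estimate) is proved. Without it, the limit problem \eqref{eq:darcy}--\eqref{eq:cons} that your weak limit is supposed to solve is not even well-defined.
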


The proof of Theorem \ref{thm:primal} relies on two lemmas. The first of these guarantees a solution to \eqref{eq:darcy}-\eqref{eq:cons} for a source term $g \in L^2_{\alpha+1}(\Omega)$ for general $\alpha \in \mathbb{R}$: 
\begin{lemma} 
\label{lemma:primal}
Let $g \in L^2_{\alpha+1}(\Omega)$. Under the assumptions of \ref{thm:primal}, there then exists $(u, \mathbf{q}) \in L^2_{\alpha-1}(\Omega) \times V_{\alpha+1}(\mathrm{div}; \Omega)$ solving
\begin{subequations}
\begin{align}
(\kappa^{-1} \mathbf{q}, \mathbf{v}) - (\nabla \cdot \mathbf{v}, u) + (u_0, \mathbf{v}\cdot \mathbf{n})_{\partial \Omega}=& \, 0 && \forall \, \mathbf{v}  \in V_{-\alpha+1}(\mathrm{div}; \Omega), \label{eq:mixed-lemma-proof1}\\
(\nabla \cdot \mathbf{q}, \theta) =& \, (g, \theta) && \forall \, \theta \in L^2_{-\alpha-1}(\Omega).\label{eq:mixed-lemma-proof2}
\end{align}
\end{subequations}
\end{lemma}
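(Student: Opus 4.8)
The plan is to recast \eqref{eq:mixed-lemma-proof1}--\eqref{eq:mixed-lemma-proof2} as a single operator equation on product spaces and invoke the generalized Lax--Milgram (Banach--Ne\v{c}as--Babu\v{s}ka) theorem. I take as trial space $X = V_{\alpha+1}(\mathrm{div};\Omega)\times L^2_{\alpha-1}(\Omega)$ and as test space $Y = V_{-\alpha+1}(\mathrm{div};\Omega)\times L^2_{-\alpha-1}(\Omega)$, and for $x=(\mathbf{q},u)\in X$, $y=(\mathbf{v},\theta)\in Y$ define the bilinear form
\[
B(x,y) = (\kappa^{-1}\mathbf{q},\mathbf{v}) - (\nabla\cdot\mathbf{v},u) + (\nabla\cdot\mathbf{q},\theta),
\]
together with the functional $L(y) = (g,\theta) - (u_0,\mathbf{v}\cdot\mathbf{n})_{\partial\Omega}$. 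Existence then follows once $B$ is continuous and satisfies the inf--sup and non-degeneracy conditions. Continuity is routine: each of the three terms pairs a function in an $r^{\beta}$-weighted space against one in the dual $r^{-\beta}$-weighted space, so the weighted Cauchy--Schwarz inequality \eqref{eq:weighted-cauchy-schwarz} bounds $B$ by $C\|x\|_X\|y\|_Y$; boundedness of $L$ uses \eqref{eq:weighted-cauchy-schwarz} on $(g,\theta)$ and, since $\Lambda$ lies in the interior so that $r^{\pm\alpha}$ is bounded above and below near $\partial\Omega$, the standard normal-trace estimate for $H(\mathrm{div})$ on the boundary term.

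For the inf--sup condition I would follow the Brezzi splitting adapted to the unequal trial/test weights. The divergence is controlled directly: choosing $\mathbf{v}=0$ and $\theta = r^{2(\alpha+1)}\nabla\cdot\mathbf{q}$, which lies in $L^2_{-\alpha-1}(\Omega)$ with $\|\theta\|_{L^2_{-\alpha-1}(\Omega)} = \|\nabla\cdot\mathbf{q}\|_{L^2_{\alpha+1}(\Omega)}$, gives $B(x,y)=\|\nabla\cdot\mathbf{q}\|_{L^2_{\alpha+1}(\Omega)}^2$. To recover $\|u\|_{L^2_{\alpha-1}(\Omega)}$ one needs $\mathbf{v}\in V_{-\alpha+1}(\mathrm{div};\Omega)$ with $\nabla\cdot\mathbf{v} = -r^{2(\alpha-1)}u$ and $\|\mathbf{v}\|_{V_{-\alpha+1}(\mathrm{div};\Omega)}\le C\|u\|_{L^2_{\alpha-1}(\Omega)}$; such $\mathbf{v}$ is obtained by solving an auxiliary weighted problem $\Delta\psi = -r^{2(\alpha-1)}u$ and setting $\mathbf{v}=\nabla\psi$, the required bound being a weighted elliptic a~priori estimate of the type underlying \eqref{eq:imbedding}. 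The term $-(\nabla\cdot\mathbf{v},u)$ then reproduces $\|u\|_{L^2_{\alpha-1}(\Omega)}^2$, while the spurious $(\kappa^{-1}\mathbf{q},\mathbf{v})$ contribution is absorbed by Young's inequality against the divergence control already in hand.

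The main obstacle is the remaining coercivity of the velocity block, i.e. recovering $\|\mathbf{q}\|_{L^2_{\alpha}(\Omega)}$. The naive choice $\mathbf{v}=r^{2\alpha}\kappa\mathbf{q}$ gives $(\kappa^{-1}\mathbf{q},\mathbf{v})=\|\mathbf{q}\|_{L^2_{\alpha}(\Omega)}^2$ but is \emph{inadmissible}: its distributional divergence requires differentiating both the merely $L^\infty$ coefficient $\kappa$ and a field $\mathbf{q}$ that controls only its divergence, so $\mathbf{v}\notin V_{-\alpha+1}(\mathrm{div};\Omega)$. I would resolve this by restricting the coercivity requirement to the solenoidal kernel $\{\mathbf{q}:\nabla\cdot\mathbf{q}=0\}$, on which $B$ decouples, and replacing $r^{2\alpha}\kappa\mathbf{q}$ by its $\kappa^{-1}$-orthogonal projection onto divergence-free fields in $(L^2_{-\alpha}(\Omega))^3$; the complementary part is a $\kappa$-scaled gradient $\kappa\nabla\phi$ with $\phi=0$ on $\partial\Omega$, which pairs to zero against a divergence-free $\mathbf{q}$, leaving the clean lower bound $\|\mathbf{q}\|_{L^2_{\alpha}(\Omega)}^2$, and the projection is again furnished by a weighted Poisson solve for $\phi$. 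Combining the three constructions yields the trial-side inf--sup, and the symmetry of the duality pairing \eqref{eq:weighted-cauchy-schwarz} gives the transposed non-degeneracy condition by the identical argument with $\alpha$ replaced by $-\alpha$. The analytic crux throughout is thus the solvability of these weighted divergence/Poisson problems and their a~priori bounds, which hinge on \eqref{eq:imbedding} and on the admissibility of the weight exponent $\alpha$. A cleaner but equivalent route, should the abstract verification prove cumbersome, is to solve the primal weighted problem $-\nabla\cdot(\kappa\nabla u)=g$ in $V^1_\alpha(\Omega)$, set $\mathbf{q}=-\kappa\nabla u$, and recover \eqref{eq:mixed-lemma-proof1}--\eqref{eq:mixed-lemma-proof2} via a weighted Green's formula, with the density of smooth functions (valid in the Muckenhoupt range) as the one delicate point.
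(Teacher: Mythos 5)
Your overall architecture is the same as the paper's: a BNB (generalized Lax--Milgram) argument on exactly these weighted trial/test pairs, with the same two key constructions --- the choice $\theta = r^{2(\alpha+1)}\nabla\cdot\mathbf{q}$, which forces $\nabla\cdot\mathbf{q}=0$ on the constraint kernel, and a weighted Poisson solve $\Delta\psi = r^{2(\alpha-1)}u$ (Kondratiev--Oleinik) to produce the field realizing the pressure inf-sup. You are in fact more careful than the paper on one point: the inf-sup step does need the a priori bound $\norm{\mathbf{v}}{V_{-\alpha+1}(\mathrm{div};\Omega)}\le C\norm{u}{L^2_{\alpha-1}(\Omega)}$ from the weighted elliptic estimate, not the reverse inequality the paper writes down.

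The genuine gap is in your velocity-coercivity step. Your $\kappa^{-1}$-orthogonal projection of $r^{2\alpha}\kappa\mathbf{q}$ onto divergence-free fields of $(L^2_{-\alpha}(\Omega))^3$ requires solving $\nabla\cdot(\kappa\nabla\phi)=\nabla\cdot(r^{2\alpha}\kappa\mathbf{q})$ with $\phi|_{\partial\Omega}=0$, $\nabla\phi\in(L^2_{-\alpha}(\Omega))^3$, and test gradients in $(L^2_{\alpha}(\Omega))^3$: a nonsymmetric weighted elliptic problem with a merely $L^\infty$ coefficient. This is not covered by the theory you invoke (Kondratiev--Oleinik treats the Laplacian), and its well-posedness is of essentially the same nature and difficulty as the lemma being proved, so as written the argument is circular. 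The repair is that you over-diagnosed the inadmissibility: only the factor $\kappa$ in the test function is problematic, so drop it. On the divergence-free kernel (which your own $\theta$-choice has already identified), test with $\mathbf{v}=r^{2\alpha}\mathbf{q}$. Then
\begin{align*}
\nabla\cdot\mathbf{v} = 2\alpha\, r^{2\alpha-1}\,\nabla r\cdot\mathbf{q}, \qquad
\norm{\nabla\cdot \mathbf{v}}{L^2_{-\alpha+1}(\Omega)} \le 2\alpha \norm{\mathbf{q}}{L^2_{\alpha}(\Omega)}
\end{align*}
since $\vert\nabla r\vert\le 1$, while $\norm{\mathbf{v}}{L^2_{-\alpha}(\Omega)}=\norm{\mathbf{q}}{L^2_{\alpha}(\Omega)}$; hence $\mathbf{v}$ is admissible with $\norm{\mathbf{v}}{V_{-\alpha+1}(\mathrm{div};\Omega)}\le C\norm{\mathbf{q}}{V_{\alpha+1}(\mathrm{div};\Omega)}$, and
\begin{align*}
(\kappa^{-1}\mathbf{q},\mathbf{v}) = \int_\Omega r^{2\alpha}\kappa^{-1}\vert\mathbf{q}\vert^2\,\mathrm{d}x \ge \norm{\kappa}{L^\infty(\Omega)}^{-1}\norm{\mathbf{q}}{L^2_{\alpha}(\Omega)}^2 .
\end{align*}
Exact reproduction of the norm is unnecessary; a uniform lower bound suffices for weak coercivity. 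This is precisely the paper's choice, and it eliminates every auxiliary solve involving $\kappa$ --- the only elliptic input left is the Laplacian result already used for the inf-sup. (Your fallback route via the primal weighted problem has the same defect in disguise: it presupposes a weighted primal theory for $L^\infty$ coefficients that you do not establish.)
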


The second lemma addresses the line source:
\begin{lemma}
For $\alpha>0$ and $\delta_\Lambda$ in \eqref{eq:dirac-limit}, one has $\delta_\Lambda \in L^2_{\alpha+1}(\Omega)$. 
\label{lemma:dirac}
\end{lemma}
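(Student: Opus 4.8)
The plan is to establish membership through the nascent sequence $\delta_\Lambda^\epsilon$ defined in \eqref{eq:dirac-limit}, for which the weighted norm is an honest integral, and then pass to the limit $\epsilon\to 0$. Since $L^2_{\alpha+1}(\Omega)$ is a Hilbert space, it suffices to produce a uniform bound $\sup_\epsilon \norm{\delta_\Lambda^\epsilon}{L^2_{\alpha+1}(\Omega)} < \infty$: weak compactness then yields a limit in $L^2_{\alpha+1}(\Omega)$, consistent with the distributional convergence $\delta_\Lambda^\epsilon \to \delta_\Lambda$, which is the content of $\delta_\Lambda \in L^2_{\alpha+1}(\Omega)$. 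Everything therefore reduces to a single quantitative estimate.

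First I would reduce the weighted norm to the weighted volume of the $\epsilon$-tube around $\Lambda$. Since $\delta_\Lambda^\epsilon = (\pi\epsilon^2)^{-1}$ on $\{r \le \epsilon\}$ and vanishes elsewhere,
\begin{align*}
\norm{\delta_\Lambda^\epsilon}{L^2_{\alpha+1}(\Omega)}^2 = \int_\Omega r^{2(\alpha+1)}\left(\delta_\Lambda^\epsilon\right)^2 \, \mathrm{d}x = \frac{1}{\pi^2 \epsilon^4} \int_{\{r \le \epsilon\}} r^{2(\alpha+1)} \, \mathrm{d}x,
\end{align*}
so the whole question is how fast the weighted volume of the shrinking tube decays.

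Next I would evaluate this integral in tubular (Fermi) coordinates $(s, r, \phi)$ adapted to each component $\Lambda_i$, in which the volume element reads $\mathrm{d}x = r\,(1 + O(r))\,\mathrm{d}r\,\mathrm{d}\phi\,\mathrm{d}s$. For $\epsilon$ small enough that these coordinates form a diffeomorphism on the tube, the leading-order contribution is
\begin{align*}
\int_{\{r \le \epsilon\}} r^{2(\alpha+1)} \, \mathrm{d}x \sim 2\pi |\Lambda| \int_0^\epsilon r^{2\alpha+3} \, \mathrm{d}r = \frac{2\pi |\Lambda|}{2\alpha+4} \, \epsilon^{2\alpha+4},
\end{align*}
and hence $\norm{\delta_\Lambda^\epsilon}{L^2_{\alpha+1}(\Omega)}^2 \sim C\, \epsilon^{2\alpha}$. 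The hypothesis $\alpha > 0$ is exactly what renders this exponent nonnegative, so the bound is uniform as $\epsilon \to 0$; the sequence is therefore bounded in $L^2_{\alpha+1}(\Omega)$, which completes the argument.

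The main obstacle I anticipate is not the leading-order computation but the rigorous justification of the change of variables. I would invoke the tubular neighborhood theorem for the smooth embedded curves $\Lambda_i$ to obtain valid coordinates on $\{r \le \epsilon\}$ for small $\epsilon$, bound the curvature correction so that the $O(r)$ term in the Jacobian is genuinely lower order and does not alter the exponent of $\epsilon$, and treat the endpoints of each $\Lambda_i$ together with any junctions between distinct components, where the tube is capped or the tubes overlap. These exceptional regions contribute at strictly higher powers of $\epsilon$ and are absorbed into the estimate, but controlling them uniformly in $\epsilon$ is the technical heart of the proof.
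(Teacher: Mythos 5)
Your quantitative core is exactly the paper's: in tubular (Frenet) coordinates the weighted norm scales as $\norm{\delta_\Lambda^\epsilon}{L^2_{\alpha+1}(\Omega)}^2 \sim C\epsilon^{2\alpha}$, which matches the paper's computation with $\epsilon = 1/k$. The gap is in the functional-analytic wrapper you put around it. First, boundedness plus weak compactness yields only a weakly convergent \emph{subsequence}; to conclude that the limit in \eqref{eq:dirac-limit} lies in $L^2_{\alpha+1}(\Omega)$ you need convergence of the whole sequence in that space. Second, and more seriously, the identification of the weak limit with the distribution $\delta_\Lambda$ cannot be made rigorous: weak convergence in $L^2_{\alpha+1}(\Omega)$ controls pairings only against the dual weighted space $L^2_{-\alpha-1}(\Omega)$ (cf. \eqref{eq:weighted-cauchy-schwarz}), and for $\alpha>0$ no continuous function that is nonzero somewhere on $\Lambda$ belongs to $L^2_{-\alpha-1}(\Omega)$, since $\int r^{-2(\alpha+1)}\,\mathrm{d}x \sim \int r^{-2\alpha-1}\,\mathrm{d}r$ diverges near the line. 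The topology in which you are compact is therefore blind to the Dirac mass, so ``consistency with the distributional convergence'' is precisely the step that fails. Indeed, your own estimate gives more than boundedness: $\norm{\delta_\Lambda^\epsilon}{L^2_{\alpha+1}(\Omega)}^2 \leq C\epsilon^{2\alpha} \to 0$ for $\alpha>0$, so the sequence converges \emph{strongly to zero} in $L^2_{\alpha+1}(\Omega)$, and every weak limit point is the zero function --- not an object you can match to the distributional Dirac by testing against smooth functions.

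The paper avoids both issues by proving strong convergence directly: it computes $\norm{f_{k+m}-f_k}{L^2_{\alpha+1}(\Omega)}^2$ in the same generalized-cylinder coordinates, observes that every resulting term carries a negative power of $k,m$ when $\alpha>0$, concludes the sequence is Cauchy, and invokes completeness of $L^2_{\alpha+1}(\Omega)$ to obtain a limit --- that limit being what the lemma means by $\delta_\Lambda$ as an element of this space. Your computation already contains everything needed for this route: either estimate differences as the paper does, or simply observe that $\epsilon^{2\alpha}\to 0$ gives strong convergence of the sequence itself. What should be deleted is the weak-compactness and identification scaffolding; what should be kept is the tube integral. (Your concern about curvature corrections and endpoint caps is fair, but it applies equally to the paper's argument, which also treats the tube as a generalized cylinder at leading order; it is not what separates the two proofs.)
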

This section will proceed as follows. First, we give a definition of weak coercivity. With this in hand, we then state the Brezzi–Nečas–Babuška (BNB) Theorem \cite[Thm 2.6]{bernardi1988} (sometimes referred to as the Generalized Lax-Milgram theorem). After this, we give a proof of Lemma \ref{lemma:primal}; this is done by verifying the assumptions of the BNB Theorem. Next, we give a proof of Lemma \ref{lemma:dirac}. This is done by showing that the sequence $\delta_\Lambda^\epsilon$ is Cauchy in $L^2_{\alpha+1}(\Omega)$ (which is complete) and thus converges in $L^2_{\alpha+1}(\Omega)$. It follows that the line source $\delta_\Lambda$ belongs to $L^2_{\alpha+1}(\Omega)$. We conclude by giving a proof of \ref{thm:primal}.

\begin{theorem}[BNB Theorem]
\label{sec:bnb-theorem}
Let $X_i$ and $M_i$ be real reflexive Banach spaces ($i=1,2$). Assume we are given three continuous bilinear forms: $a: X_2 \times X_1 \rightarrow \mathbb{R}, \, b_1: X_1 \times M_1 \rightarrow \mathbb{R}, \, b_2: X_2 \times M_2 \rightarrow \mathbb{R}$. For any given $f \in (M_2)^*$ and $g \in (X_1)^*$, we consider the following problem:

Find $(q, u) \in X_2 \times M_1$ s.t.
\begin{subequations}
\begin{align}
a(q, v) + b_1(v, u) &= \langle g, v \rangle, \label{eq:bnb-problem1}\\
b_2(q, \theta) &= L(\theta)
\label{eq:bnb-problem2}
\end{align}
\end{subequations}
for all $(v, \theta) \in X_1 \times M_2$.

Let $K_i$ denote the kernel space of $b_i$:
\begin{align*}
K_i &= \lbrace  v \in X_i : b_i(v, u) = 0 \quad \forall \, u \in M_i  \rbrace.
\end{align*}
The problem \eqref{eq:bnb-problem1}-\eqref{eq:bnb-problem2} then admits a solution $(\mathbf{q}, u) \in X_2 \times M_1$ if the following assumptions hold:
\hspace{-5em} 
\begin{itemize}[itemindent=5em]
\item[Condition $(C_0)$:] Weak coercivity of $a(\cdot, \cdot)$: There exists constants $\gamma_1, \gamma_2>0$ s.t. 
\begin{align}
\sup_{v \in K_1} \frac{a(q,v)}{\norm{v}{X_1}} \geq \gamma_1 \norm{q}{X_2} \quad \forall \, q \in K_2, \label{eq:weak-coercivity1}
\end{align}
and 
\begin{align}
\sup_{q \in K_2} \frac{a(q,v)}{\norm{q}{X_1}} \geq \gamma_2 \norm{v}{X_1} \quad \forall \, v \in K_1. \label{eq:weak-coercivity2}
\end{align}
\item[Condition $(C_i):$] Inf-sup condition on $b_i(\cdot, \cdot)$
There exists $\beta_i>0$ s.t. 
\begin{align}
\sup_{v \in X_i} \frac{b_i(v, u)}{\norm{v}{X_i}} \geq \beta_i \norm{p}{M_i} \quad \forall \, u \in M_i. \label{eq:infsup}
\end{align}
\end{itemize}
\end{theorem}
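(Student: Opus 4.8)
The plan is to prove this classical saddle-point existence result by the standard operator-theoretic reduction, translating each inf-sup/coercivity hypothesis into a statement about the operators associated to the bilinear forms and then assembling a solution in two stages. Introduce the operators $A \colon X_2 \to X_1^*$, $B_1 \colon X_1 \to M_1^*$ and $B_2 \colon X_2 \to M_2^*$ defined by $\langle Aq, v\rangle = a(q,v)$, $\langle B_1 v, u\rangle = b_1(v,u)$ and $\langle B_2 q, \theta\rangle = b_2(q,\theta)$, so that the kernels $K_i$ are precisely $\ker B_i$. The first observation is that the inf-sup condition \eqref{eq:infsup} in $(C_i)$ is equivalent to the adjoint $B_i^*$ being bounded below by $\beta_i$; since $X_i$ and $M_i$ are reflexive, this forces $B_i^*$ to have closed range and $B_i$ to be surjective onto $M_i^*$. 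Likewise, \eqref{eq:weak-coercivity1} says that $A$ restricted to $K_2$, viewed as a map into $K_1^*$, is bounded below, while \eqref{eq:weak-coercivity2} says the same for its adjoint; together with reflexivity these are exactly the hypotheses of the single-form version of the present theorem (the Banach--Ne\v{c}as--Babu\v{s}ka lemma, itself a reformulation of the closed-range theorem for $A$), so $A|_{K_2}\colon K_2 \to K_1^*$ is an isomorphism.

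With these reinterpretations in hand, I would construct the solution as follows. First, using surjectivity of $B_2$ (from $(C_2)$), lift the constraint: pick $q_L \in X_2$ with $b_2(q_L, \theta) = L(\theta)$ for all $\theta \in M_2$ and $\norm{q_L}{X_2} \le \beta_2^{-1}\norm{L}{M_2^*}$, and seek $q = q_L + q_0$ with $q_0 \in K_2$; then \eqref{eq:bnb-problem2} holds automatically since $B_2 q_0 = 0$. Next, test \eqref{eq:bnb-problem1} only against $v \in K_1$, where the term $b_1(v,u)$ vanishes, reducing it to the kernel problem: find $q_0 \in K_2$ with $a(q_0, v) = \langle g, v\rangle - a(q_L, v)$ for all $v \in K_1$. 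The right-hand side defines an element of $K_1^*$, and since $A|_{K_2}$ is an isomorphism by $(C_0)$ this problem has a unique solution $q_0$, with $\norm{q_0}{X_2}$ controlled by $\gamma_1^{-1}$. This determines $q = q_L + q_0 \in X_2$, and it already satisfies \eqref{eq:bnb-problem1} tested against the kernel.

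It remains to recover the multiplier $u \in M_1$ so that \eqref{eq:bnb-problem1} holds on all of $X_1$. Consider the residual functional $\ell \in X_1^*$ given by $\ell(v) = \langle g, v\rangle - a(q,v)$. By the previous step $\ell$ vanishes on $K_1 = \ker B_1$, i.e. $\ell$ lies in the annihilator $(\ker B_1)^{\circ}$. This is the crux of the argument: because $B_1$ is surjective its range is closed, so the closed-range theorem gives $\mathrm{Range}(B_1^*) = (\ker B_1)^{\circ}$, whence there exists $u \in M_1$ with $B_1^* u = \ell$, that is $b_1(v,u) = \langle g, v\rangle - a(q,v)$ for all $v \in X_1$. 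Rearranging recovers \eqref{eq:bnb-problem1} on all of $X_1$, while the inf-sup bound in $(C_1)$ yields $\norm{u}{M_1} \le \beta_1^{-1}\norm{\ell}{X_1^*}$. Thus $(q,u) \in X_2 \times M_1$ solves the system.

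I expect the main obstacle to be precisely this multiplier-recovery step, where one must verify that the residual lies in the annihilator of $\ker B_1$ and then invoke the closed-range theorem; this is where reflexivity and the closedness of $\mathrm{Range}(B_1)$ (supplied by the inf-sup on $b_1$) are genuinely used. A secondary subtlety, inherent to the non-symmetric two-space formulation, is the bookkeeping: one must pair each hypothesis with the correct operator — $(C_2)$ with the lifting of the constraint, $(C_0)$ with solvability on the kernels, and $(C_1)$ with the multiplier — and confirm that the closed subspaces $K_1$ and $K_2$ inherit reflexivity from $X_1$ and $X_2$, so that the single-form lemma and the closed-range theorem may be applied on them.
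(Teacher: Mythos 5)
Your proposal is correct, but note that the paper does not actually prove this theorem: it is quoted verbatim from the literature, with the proof deferred to Bernardi, Canuto and Maday \cite[Thm.\ 2.6]{bernardi1988}, so there is no in-paper argument to compare against. What you have written is essentially the standard operator-theoretic proof underlying that citation, and all three stages are sound: $(C_2)$ gives surjectivity of $B_2$ and the constraint lifting $q_L$; the two-sided weak coercivity $(C_0)$ makes $A|_{K_2}\colon K_2 \to K_1^*$ an isomorphism (the first inequality giving injectivity and closed range, the second giving dense range via reflexivity of the closed subspace $K_1$); and $(C_1)$ plus the closed range theorem identifies $\mathrm{Range}(B_1^*)$ with the annihilator of $K_1$, which is exactly what the multiplier-recovery step needs. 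Two small points are worth tightening if you write this out in full. First, the quantitative bound $\norm{q_L}{X_2} \leq \beta_2^{-1} \norm{L}{(M_2)^*}$ requires either an $\epsilon$ of slack or the observation that, by reflexivity, the quotient norm on $X_2/K_2$ is attained on each coset, so a minimal-norm lift exists. Second, you silently (and correctly) repaired two typos in the paper's statement: in the second weak-coercivity inequality the denominator should be $\norm{q}{X_2}$ rather than $\norm{q}{X_1}$, and in the inf-sup condition the right-hand side should read $\beta_i \norm{u}{M_i}$ rather than $\beta_i \norm{p}{M_i}$; your operator-theoretic reading matches the intended (and citable) hypotheses. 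With those caveats your argument is complete and would serve as a self-contained proof of the cited result, including the stability bounds $\norm{q_0}{X_2} \leq \gamma_1^{-1} \norm{\ell|_{K_1}}{K_1^*}$ and $\norm{u}{M_1} \leq \beta_1^{-1} \norm{\ell}{(X_1)^*}$ that the paper's later existence argument implicitly relies on.
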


\begin{proof}[Proof of Lemma \ref{lemma:primal}]
The proof consists of verifying the conditions of the BNB-theorem, taking the variational forms 
\begin{align*}
a(\mathbf{q}, \mathbf{v}) &= (\kappa^{-1}\mathbf{q}, \mathbf{v}), \\
 b_1(\mathbf{q}, \theta) = b_2(\mathbf{q}, \theta) &= -(\nabla \cdot \mathbf{q}, \theta), \\ 
\langle g, v \rangle &=  (u_0, \mathbf{v} \cdot \mathbf{n})_{\partial \Omega},
\end{align*}
and the function spaces $X_2 = V_{\alpha+1}(\mathrm{div}; \Omega)$, $X_1 = V_{-\alpha+1}(\mathrm{div}; \Omega), M_1 = L^2_{\alpha-1}(\Omega)$ and $M_2 = L^2_{-\alpha-1}(\Omega)$. The kernel spaces $K_1$ and $K_2$ are then given as 
\begin{align*}
K_1 &= \lbrace  \mathbf{v} \in V_{-\alpha+1}(\mathrm{div}; \Omega) : b_1(\mathbf{v}, u) = 0 \quad \forall \, u \in L^2_{\alpha-1}(\Omega)  \rbrace, \\
K_2 &= \lbrace  \mathbf{q} \in V_{\alpha+1}(\mathrm{div}; \Omega) : b_2(\mathbf{q}, \theta) = 0 \quad \forall \, \theta \in L^2_{-\alpha-1}(\Omega)  \rbrace.
\end{align*}

By an application of Cauchy-Schwarz \eqref{eq:weighted-cauchy-schwarz}, it then follows that the bilinear form $a(\cdot, \cdot)$ is bounded on $X_2 \times X_1$. The same holds true for $b(\cdot, \cdot)$ on $X_2 \times M_2$ and $X_1 \times M_1$, and $\langle g, v \rangle$ on $X_1$.

Next, let us consider equation \eqref{eq:weak-coercivity1} in Condition $(C_0)$. Before proceeding, let us first note a central property of the kernel space $K_2$. Fix arbitrary $\mathbf{q} \in K_2$ and take $\theta = r^{2(\alpha+1)} \nabla \cdot \mathbf{q} \in M_2$. This yields $b_2(\mathbf{q}, r^{2(\alpha+1)} \nabla \cdot \mathbf{q})=\norm{\nabla \cdot \mathbf{q}}{L^2_{\alpha+1}(\Omega)}=0$ for all $\mathbf{q} \in K_2$.
This yields
\begin{align*}
a(\mathbf{q}, r^{2 \alpha} \mathbf{q})= \norm{\kappa^{-1}\mathbf{q}}{L^2_\alpha(\Omega)}^2 \geq C_\kappa \norm{\mathbf{q}}{L^2_\alpha(\Omega)}^2 +  \norm{\nabla \cdot \mathbf{q}}{L^2_{\alpha+1}(\Omega)}^2 \geq C \norm{\mathbf{q}}{X_2}^2
\end{align*}
where we used that $\kappa \in L^\infty(\Omega)$. Next, a computation shows
\begin{align*}
\norm{\mathbf{v}}{X_1}^2 &= \norm{r^{2\alpha} \mathbf{q}}{X_1}^2 = \norm{r^{2\alpha }\mathbf{q}}{L^2_{-\alpha}(\Omega)}^2 + \norm{\nabla \cdot (r^{2\alpha }\mathbf{q})}{L^2_{-\alpha+1}(\Omega)}^2  \\
&\leq \norm{\mathbf{q}}{L^2_{\alpha}(\Omega)}^2 + 2 \alpha \norm{r^{2\alpha-1 } \nabla r \cdot \mathbf{q}}{L^2_{-\alpha+1}(\Omega)}^2 + \norm{r^{2\alpha} \nabla \cdot \mathbf{q}}{L^2_{-\alpha+1}(\Omega)}^2  \\
& \leq \norm{\mathbf{q}}{X_2}^2 + 2 \alpha \norm{ \nabla r \cdot \mathbf{q}}{L^2_{\alpha}(\Omega)}^2. 
\end{align*}
As $\nabla r \in C^\infty(\Omega)$, it follows that
\begin{align*}
\sup_{\mathbf{v} \in K_1} \frac{a(\mathbf{q}, \mathbf{v})}{\norm{\mathbf{v}}{X_1}} \geq \gamma \norm{\mathbf{q}}{X_2} \quad \forall \, \mathbf{q} \in K_2
\end{align*}
and \eqref{eq:weak-coercivity1} holds for some $\gamma_1 > 0$. To show that \eqref{eq:weak-coercivity2} holds, one can switch the sign of $\alpha$ and repeat this argument with $(\theta,\mathbf{q})$ switched with $(u,\mathbf{v})$. It follows that Condition $(C_0)$ holds.

Next we will verify Condition $(C_{i})$. Consider first $i=1$. The proof works by constructing a suitable $\mathbf{v} \in X_1$ for each $u \in M_{1}$. Let $\mathbf{v}$ solve the equation $\nabla \cdot \mathbf{v} = r^{2\alpha-2}u \in M_{1-\alpha}$. Further setting $\mathbf{v} = \nabla \xi$ then requires solving the Poisson problem $\Delta \xi = r^{2\alpha-2}u$ where the right-hand side $r^{2\alpha-2}u$ belongs to $L^2_{\alpha-1}$.
From \cite[Theorem 1]{oleinik} we know there exists such a solution $\xi \in H^2_{\alpha-1}(\Omega)$; thus a solution $\mathbf{v} \in H^1_{\alpha-1}(\mathrm{div};\Omega)$ exists solving to $\nabla \cdot \mathbf{v} = r^{2\alpha-2}u$. For this $\mathbf{v}$, $b(\mathbf{v}, u)=\int_\Omega (r^{\alpha-1}u)^2 \mathrm{d}\Omega = \norm{u}{M_{1}}^2$. Furthermore, 
\begin{align*}
\norm{\mathbf{v}}{X_1}^2 &= \norm{r^{-\alpha}\mathbf{v}}{L^2(\Omega)}^2 + \norm{r^{-\alpha+1}\nabla \cdot \mathbf{v}}{L^2(\Omega)}^2  \\
&= \norm{r^{-\alpha}\mathbf{v}}{L^2(\Omega)}^2 + \norm{r^{-\alpha+1} r^{2\alpha-2}u}{L^2(\Omega)}^2 \\
&= \norm{r^{-\alpha}\mathbf{v}}{L^2(\Omega)}^2 + \norm{r^{\alpha-1}u}{L^2(\Omega)}^2 \\
&= \norm{r^{-\alpha}\mathbf{v}}{L^2(\Omega)}^2 + \norm{u}{M_1}^2
\end{align*}
and it is obvious that $\norm{\mathbf{v}}{X_{2}}^2 \geq  \norm{u}{M_1}^2$. It follows that
\begin{align*}
\frac{b_1(\mathbf{v}, u)}{\norm{\mathbf{v}}{X_{2}} \norm{u}{M_1}}  = \frac{\norm{u}{M_{1}}^2}{ \norm{\mathbf{v}}{X_{2}}\norm{u}{M_1}}  \geq 1 
\end{align*}
and \eqref{eq:infsup} holds with $\beta_i \leq 1$. To show \eqref{eq:infsup} for $i=2$, one can switch the sign of $\alpha$ and repeat this argument with $(\theta,\mathbf{q})$ switched with $(u,\mathbf{v})$. It follows that Condition $(C_{2})$ holds. 
\end{proof}

\begin{figure}
	\centering
	\includegraphics[width=2.0in]{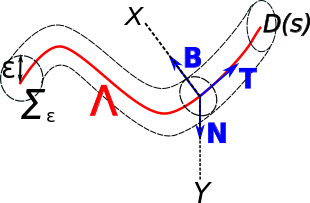}
	\caption{A generalized cylinder $\Sigma_\epsilon$ with centreline $\Lambda$ and a constant radius $\epsilon$. The curve $\Lambda$ is associated with a Frenet frame $\mathbf{T}, \mathbf{N}, \mathbf{B}$; here, $\mathbf{T}$ denotes its unit tangent vector, $\mathbf{N}$ its unit normal vector, and $\mathbf{B}$ its unit binormal vector.}
	\label{fig:frenet}
\end{figure}

\begin{proof}[Proof of Lemma \ref{lemma:dirac}]
First, let us rewrite $\delta_\Lambda$ to an equivalent definition:
\begin{align}
\delta_\Lambda = \lim_{k\rightarrow \infty} \delta_\Lambda^k, \quad \delta_\Lambda^k = \begin{cases} \frac{k^2}{\pi} & \text{ for } r \leq \frac{1}{k}  \\ 0 &  \text{ otherwise} \end{cases}
\end{align}
The proof is by showing that $f_k$ is a Cauchy sequence in $L^2_{\alpha+1}(\Omega)$ for $\alpha>0$. 

For each $k \in \mathbb{R}$, the function $f_k$ can be interpreted as the indicator function of a generalized cylinder $\Sigma$ with centreline $\Lambda$ and a constant radius $1/k$. Using then the notation of generalized cylinders \cite{gen-cyl}, we let $\mathbf{T}, \mathbf{N}, \mathbf{B}$ be the Frenet frame of $\Lambda$, as illustrated in Figure \ref{fig:frenet}. We further let $X$ and $Y$ denote the axes along the vectors $\mathbf{N}, \mathbf{B}$ of the Frenet frame; the coordinate axes $X,Y$ thus form a local coordinate system having origin on $\Lambda$. With this notation in hand, we have
\begin{align*}
\norm{f_{k+m}-f_k}{L^2_{\alpha+1}(\Omega)}^2 &= \int_\Sigma  (f_{k+m}-f_k)^2 r^{2\alpha+2}  \mathrm{d}
\Sigma \\
&= \int_\Lambda \int_{D(s)} (f_{k+m}-f_k)^2 r^{2\alpha+2}  \mathrm{d}
X(s) \mathrm{d} Y(s) \mathrm{d}s \\
&=  \int_0^{L } \underbrace{ \int_0^{2\pi} \int_0^{\frac{1}{k}}  (f_{k+m}-f_k)^2 r^{2\alpha+2} r \mathrm{d}r(s) \mathrm{d}\theta(s)}_{I(r,\theta;s)} \mathrm{d}s,
\end{align*}
where $D(s)$ denotes some parametrization of the cross section of $\Sigma$ at the point $\boldsymbol{\lambda}(s)$, $L$ denotes the length of $\Lambda$, and $r(s), \theta(s)$ the cylindrical coordinates of $X,Y$.

Consider for a moment $s$ to be fixed. A calculation then shows
\begin{align*}
I(r,\theta; s)&=\int_0^{2\pi} \int_0^{\frac{1}{k}}  (f_{k+m}-f_k)^2 r^{2\alpha+2} r \mathrm{d}r \mathrm{d}\theta \\ 
&= \frac{2\pi}{\pi^2} \left( \int_0^{\frac{1}{k+m}} ((k+m)^2-k^2)^2 r^{2\alpha+3}  \mathrm{d}r +  \int_{\frac{1}{k+m}}^{\frac{1}{k}} k^4 r^{2\alpha+3}  \mathrm{d}r \right) \\
&= \frac{2}{\pi}  \left( k^{-2\alpha} +  (m^2(2k+m)^2 - k^4 )(k+m)^{-4-2\alpha} \right).
\end{align*}
Note now that each term in the last line has a negative exponent, meaning that each term has a zero limit as $k,m \rightarrow \infty$. It follows that 
\begin{align}
\lim_{k,m \rightarrow \infty} I(r, \theta; s) = 0,
\end{align}
and consequently
\begin{align}
\lim_{k,m \rightarrow \infty} \norm{f_{k+m}-f_k}{L^2_{\alpha+1}(\Omega)}^2 = 0.
\end{align}
The sequence is thus Cauchy. Moreover, as the space $L^2_{\alpha+1}(\Omega)$ is complete, it follows that
 \begin{align}
\lim_{k \rightarrow \infty} f_{k} = f \in L^2_{\alpha+1}(\Omega).
\end{align}
\end{proof}

Finally, with Lemmas \ref{lemma:primal} and \ref{lemma:dirac} in hand, the proof of \ref{thm:primal} is straightforward:
\begin{proof}[Proof of \ref{thm:primal}]
By Lemma \ref{lemma:primal}, the problem \eqref{eq:darcy}-\eqref{eq:cons} satisfies all the conditions of the BNB-theorem except for boundedness of the right-hand side $(f, \theta)_\Lambda$. By Lemma \ref{lemma:dirac}, we know however that  $\delta_\Lambda$ as defined by \eqref{eq:dirac-limit} belongs to $L^2_{\alpha+1}(\Omega)$. Consequently, one has by Cauchy-Schwarz 
\begin{gather}
\begin{aligned}
(f \delta_\Lambda, \theta)_\Omega & \leq \norm{f}{L^\infty(\Omega)} \norm{\delta_\Lambda}{L^2_{\alpha+1}(\Omega)} \norm{\theta}{L^2_{-\alpha-1}(\Omega)}.
\end{aligned}
\end{gather}
\end{proof}

\section{Solution Splitting}
\label{sec:decomposition}
In the previous section, we proved the existence of $(u, \mathbf{q}) \in L^2_{\alpha-1}(\Omega) \times V_{\alpha+1}(\mathrm{div}; \Omega)$ solving \eqref{eq:darcy}-\eqref{eq:cons}. As was discussed in Section \ref{sec:notation}, the space $V_{\alpha+1}(\mathrm{div}; \Omega)$ is not Muckenhoupt. Consequently, $ V_{\alpha+1}(\mathrm{div}; \Omega) \cancel{\subset} L^1(\Omega) $. This leaves the approximation properties of $V_{\alpha+1}(\mathrm{div}; \Omega)$ highly non-standard. 

In this section, we will construct a solution splitting that can later be used to define a singularity removal based method for approximating $(u,\mathbf{q})$. Let $\Lambda =\cup_{i=1}^n \Lambda_i$ be a collection of line segments $\Lambda_i\subset \Omega$, $f \in C^0(\bar{\Omega}) \cap H^2(\Omega)$ and $\kappa>0$ be constant. Note that the regularity requirement on $f$ could be relaxed to taking $f$ piecewise $H^2$ on $\Lambda_i$ as in \cite[Section 3]{Gjerde2018}. Similarly, $\kappa$ could be taken scalar-valued using the splitting shown in \cite[Section 3.3]{Gjerde2018}. Considering again the strong formulation of the problem,
\begin{subequations}
\begin{align}
\mathbf{q} + \kappa \nabla u &= 0 && \text{ in } \Omega, \label{eq:darcy-splitting} \\
\nabla \cdot \mathbf{q} &= f \delta_\Lambda && \text{ in } \Omega, \label{eq:cons-splitting}\\
u &= u_0 \label{eq:bc-splitting} && \text{ on } \partial \Omega,
\end{align}
\end{subequations}
the solution admits a splitting into higher and lower-regularity terms
\begin{align}
(u, \mathbf{q}) = (u_s, \mathbf{q}_s) + (u_r, \mathbf{q}_r) \text{ where } \begin{cases} 
(u_s, \mathbf{q}_s) &\in L^2_{\alpha-1}(\Omega) \times V_{\alpha+1}(\mathrm{div}; \Omega),  \\
(u_r, \mathbf{q}_r) &\in L^2(\Omega) \times H(\mathrm{div}; \Omega).
\end{cases} \label{eq:sol-splitting}
\end{align}
Here, the lower-regularity pair $(u_s, \mathbf{q}_s)$ is defined as
\begin{align}
u_s := fG, \quad \mathbf{q}_s := - \kappa \nabla u_s,
\end{align}
with $G$ taken as the solution of $- \kappa \Delta G = \delta_{\Lambda}$ in $\mathbb{R}^3$ in an appropriately weak sense. In the next section we will show that this property ensures that $(u_s, \mathbf{q}_s)$ capture the singular behaviour induced by $\delta_{\Lambda}$. This allows the remainder pair $(u_r, \mathbf{q}_r)$ to enjoy higher regularity. Inserting \ref{eq:sol-splitting} into \eqref{eq:darcy-splitting}-\eqref{eq:bc-splitting}, and using $- \kappa \Delta G = \delta_{\Lambda}$, one finds that the remainder pair must satisfy
\begin{subequations}
\begin{align}
\mathbf{q}_r + \kappa \nabla u_r &= 0 && \text{ in } \Omega, \label{eq:darcy-splitting-r} \\
\nabla \cdot \mathbf{q}_r &= f_r && \text{ in } \Omega, \label{eq:cons-splitting-r}\\
u_r &= u_{r,0} \label{eq:bc-splitting-r} && \text{ on } \partial \Omega,
\end{align}
\end{subequations}
with
\begin{subequations}
\begin{align}
f_r &=   \kappa \left( \Delta f \,G + 2\nabla f \cdot \nabla G \right), \\
u_{r,0} &= u_0 - fG .
\end{align}
\end{subequations}
Thus, \eqref{eq:darcy-splitting}-\eqref{eq:bc-splitting} can be solved by finding  $(u_r, \mathbf{q}_r)$ satisfying \eqref{eq:darcy-splitting-r}-\eqref{eq:bc-splitting-r} and reconstructing $(u, \mathbf{q})$ from \eqref{eq:sol-splitting}. As $(u_r, \mathbf{q}_r)$ enjoy higher regularity compared to the full solution, one expects this approach to yield improved approximation properties. We will return to this observation in Section \ref{sec:disc} when introducing a numerical approach to approximate the solution. 

The section will proceed as follows. In Section \ref{sec:capturing-the-sing}, we show how one can construct the solution splitting \ref{eq:sol-splitting} so that $(u_s, \mathbf{q}_s)$ capture the solution singularity. In Section \ref{sec:splitting-reg}, we discuss in more detail the regularity of the splitting terms. In particular, we give a justification of \eqref{eq:sol-splitting}. Finally, we give in Section \ref{sec:splitting-thm} a splitting theorem that summarizes the results and discussions in the preceding sections. 

\subsection{Construction of the Solution Splitting}
\label{sec:capturing-the-sing}
In this section, we will show how to construct solution splittings of the type \eqref{eq:sol-splitting}. The solution splitting can be constructed in two steps: (1) identifying an explicit function $G$ capturing the solution singularity induced by $\delta_\Lambda$ and (2) identifying the system that the remainder pair $(u_r, \mathbf{q}_r)$ must solve. 

Let us start with the first step. Let $\Lambda \subset \Omega$ be a single line segment with endpoints $\mathbf{a}$ and $\mathbf{b}$. Formally, $G$ is constructed by finding a function for which the operator $-\kappa \Delta$ returns the line source $\delta_\Lambda$. In other words, we want to find $G$ so that $-\kappa \Delta G = \delta_\Lambda$ in $\mathbb{R}^3$. Let $G_{3D}$ denote the Green's function of the Laplace operator in $\mathbb{R}^3$:
\begin{align}
G_{3D}(\mathbf{x}, \mathbf{y}) =  \frac{1}{4\pi} \frac{1}{\Vert \mathbf{x}-\mathbf{y}\Vert}.
\end{align}
Setting for the moment $\kappa=1$ and using Green's function theory, a candidate $G$ can then be defined as $G = \delta_\Lambda \ast G_{3D}$, where $\ast$ denotes the convolution operator. The line segment can be described by the parametrization $\Lambda: \mathbf{a} + \pmb{\tau} s$ for $s \in (0,L)$, where $\pmb{\tau}$ denotes the normalized tangent vector of $\Lambda$, i.e. $\pmb{\tau} = (\mathbf{b}-\mathbf{a})/L$, $L$ denotes the length of $\Lambda$, i.e. $L = \Vert \mathbf{b}-\mathbf{a}\Vert$. A calculation then reveals the explicit solution
\begin{gather}
\begin{aligned}
G(\mathbf{x}) &=  \int_\Omega \delta_{\Lambda} G_{3D}(\mathbf{x}, \mathbf{y}) \, \mathrm{d} \mathbf{y} \\
 &=  \frac{1}{4\pi} \int_\Omega \frac{\delta_{\Lambda}} { \Vert \mathbf{x} - \mathbf{y} \Vert} \, \mathrm{d} \mathbf{y} \\
&= \frac{1}{4\pi} \ln \left(    \frac{r_{b} + L +  \pmb{\tau} \cdot (\mathbf{a}-\mathbf{x})   } {r_{a} + \pmb{\tau} \cdot (\mathbf{a}-\mathbf{x})   }  \right),
\end{aligned}
\end{gather}
where $r_a = \Vert \mathbf{x}-\mathbf{a}\Vert$ and $r_b = \Vert \mathbf{x}-\mathbf{b}\Vert$. By scaling $G$ with $\kappa$ and applying the superposition principle, one can then find $G$ solving $-\kappa \Delta G = \delta_\Lambda$ for a collection of line segments $\Lambda = \cup_{i=1}^n \Lambda_i$:
\begin{align}
G(\mathbf{x}) 
&= \frac{1}{4\pi \kappa} \sum_{i=1}^n  \ln \left(    \frac{r_{b, i} + L_i +  \pmb{\tau}_i \cdot (\mathbf{a}_i-\mathbf{x})   } {r_{a,i} + \pmb{\tau}_i \cdot (\mathbf{a}_i-\mathbf{x})   }  \right).
\label{eq:G}
\end{align}

Returning to the splitting ansatz $(u, \mathbf{q})=(u_s, \mathbf{q}_s)+(u_r, \mathbf{q}_r)$, the singular solution pair can then be defined as
\begin{align}
u_s = fG, \quad \mathbf{q}_s = -\kappa \nabla u_s.  \label{eq:splitting-inf}
\end{align}

The next step is to construct the equations defining the remainder pair $(u_r, \mathbf{q}_r)$. Inserting \eqref{eq:splitting-inf} into \eqref{eq:darcy-splitting}-\eqref{eq:bc-splitting}, we find that $(u_r, \mathbf{q}_r)$ must solve
\begin{subequations}
\begin{align}
\mathbf{q}_r + \kappa \nabla u_r &= 0 && \text{ in } \Omega, \\
\nabla \cdot \mathbf{q}_r &= f_r && \text{ in } \Omega, \\
u_{r} &= u_{r,0} && \text{ on } \partial \Omega,
\label{eq:rhs-singsplitting}
\end{align}
\end{subequations}
with
\begin{subequations}
\begin{align}
f_r &= \kappa \left( \Delta f G + 2 \nabla f \cdot \nabla G \right) , \label{eq:f_rsplitting}\\
u_{r,0} &= u_0 - fG \label{eq:ur0splitting}.
\end{align}
\end{subequations}
Here, we used that
\begin{gather}
\begin{aligned}
\nabla \cdot \mathbf{q}_s + \nabla \cdot \mathbf{q}_r &= - \kappa \Delta \big( f G \big) + f_r \\ 
&=\underbrace{- \kappa f \Delta G}_{=f\delta_\Lambda \text{ weakly}}  \underbrace{-2\kappa\nabla f \cdot \nabla G -\kappa \Delta f \, G + f_r}_{=0}  \\
\end{aligned}.
\end{gather}

\subsection{Regularity of the Splitting Terms}
\label{sec:splitting-reg}
The key point of the solution splitting \eqref{eq:sol-splitting} is that it forms a split into lower and higher-regularity terms. In this section, we will discuss in more detail the regularity of the splitting terms. In particular, we will show that 
\begin{subequations}
\begin{align}
(u_s, \mathbf{q}_s) &\in L^2_{\alpha-1}(\Omega) \times V_{\alpha+1}(\mathrm{div}; \Omega), \label{eq:reg-smooth-sing} \\
(u_r, \mathbf{q}_r) &\in L^2(\Omega) \times H(\mathrm{div}; \Omega). \label{eq:reg-smooth-reg}
\end{align}
\end{subequations}

We start by showing \eqref{eq:reg-smooth-sing}. As the singular terms in the splitting are explicitly given using the function $G$, this can be done by straightforward calculation. Formally, $G$ contains a logarithmic-type singularity and $\nabla G$ a $r^{-1}$-type singularity in the plane normal to $\Lambda$. We refer here to our earlier work in \cite[Section 3.2]{Gjerde2018} where the precise regularity of $G$ was determined using weighted Sobolev spaces. Therein, it was found that
\begin{align}
    G \in L^2_{\alpha-1}(\Omega)  \text{ and } \nabla G \in  L^2_{\alpha}(\Omega).
\end{align}
As $f$ was assumed to belong to $C^0(\bar{\Omega}) \cap H^2(\Omega)$, it then follows directly that
\begin{align}
    u_s \in L^2_{\alpha-1}(\Omega)  \text{ and } \mathbf{q}_s \in  L^2_{\alpha}(\Omega).
\end{align}
Finally, a calculation of $\nabla \cdot \mathbf{q}_s$ shows that $\nabla \cdot \mathbf{q}_s = 0$ a.e. everywhere. The exception is at $r=0$, wherein it admits a $r^{-2}$-type singularity. The divergence of $\mathbf{q}_s$ therefore belongs to $L^2_{\alpha+1}$-norm. It follows that $\mathbf{q}_s \in V_{\alpha+1}(\text{div}; \Omega)$. 

In order to identify the regularity of $(u_r, \mathbf{q}_r)$, consider the right-hand side $f_r$ in \eqref{eq:f_rsplitting}. A calculation shows
\begin{gather}
\begin{aligned}
\norm{f_r}{L^2(\Omega)} &\leq \kappa \left( \norm{\Delta f G}{L^2(\Omega)} + \norm{2 \nabla f \cdot \nabla G}{L^2(\Omega)} \right)\\ 
&\leq\kappa \left( \norm{\Delta f}{L^2(\Omega)} \norm{G}{L^2(\Omega)} + 2\norm{\nabla f}{L^2_{-\alpha}(\Omega)} \norm{\nabla G}{L^2_\alpha(\Omega)} \right).
\end{aligned}
\end{gather}
By the imbedding \eqref{eq:imbedding}, $\nabla f \in (H^1(\Omega))^3 \subset (H_\epsilon(\Omega))^3 \subset L^2_{\epsilon-1}(\Omega)$ for arbitrarily small $\epsilon>0$. Thus, one has $\nabla f_r \in L^2_{-\alpha}(\Omega)$ for $0 < \alpha < 1$. It follows that $f_r \in L^2(\Omega)$. The existence of
$(u_r, \mathbf{q}_r) \in L^2(\Omega) \times H(\mathrm{div}; \Omega)$ solving \eqref{eq:darcy-splitting-r}-\eqref{eq:bc-splitting-r} then follows by standard elliptic theory, see e.g. \cite{braess_2007}.

\subsection{Solution Splitting Theorem}
\label{sec:splitting-thm}
Finally, we will formalize the results of Sections \ref{sec:capturing-the-sing}-\ref{sec:splitting-reg} with a splitting theorem. 

\begin{theorem}[Singularity Splitting Theorem]
Assume $\Lambda =\cup_{i=1}^n \Lambda_i$ to be a collection of line segments $\Lambda_i\subset \Omega$, $f \in C^0(\bar{\Omega}) \cap H^2(\Omega)$ and $\kappa>0$ to be constant. The pair $(u, \mathbf{q}) \in  L^2_{\alpha-1}(\Omega) \times V_{\alpha+1}(\mathrm{div}; \Omega)$ solving \eqref{eq:darcy}-\eqref{eq:cons} then admit a splitting into higher and lower-regularity terms
\begin{align}
(u, \mathbf{q}) = (u_s, \mathbf{q}_s) +(u_r, \mathbf{q}_r), \quad \text{with } \begin{cases} (u_s, \mathbf{q}_s) &\in L^2_{\alpha-1}(\Omega) \times V_{\alpha+1}(\mathrm{div}; \Omega),  \\
(u_r, \mathbf{q}_r) &\in L^2(\Omega) \times H(\mathrm{div}; \Omega). \end{cases} 
\label{eq:u-existence}
\end{align}

The lower regularity terms are explicitly given by
\begin{align}
u_s =  f G, \quad \mathbf{q}_s = - \kappa \nabla u_s,
\end{align}
where $G$ being the logarithmic function
\begin{align}
G(\mathbf{x})  = \sum_{i=1}^n \frac{1}{4\pi \kappa} \ln \left(    \frac{r_{b, i} + L_i +  \pmb{\tau}_i \cdot (\mathbf{a}_i-\mathbf{x})   } {r_{a, i} + \pmb{\tau}_i \cdot (\mathbf{a}_i-\mathbf{x})   }  \right).
\end{align}
The higher regularity terms $(u_r, \mathbf{q}_r)$ are defined as the solutions of
\begin{align} 
   \mathbf{q}_r + \nabla u_r  &= 0 && \text{ in } \Omega \label{eq:darcy-splitting-thm-reform1} \\
   \nabla \cdot \mathbf{q}_r &= f_r &&  \text{ in } \Omega, \label{eq:cons-splitting-thm-reform2} \\
   u_r &= u_{r,0} &&  \text{ in } \partial \Omega, \label{eq:bc-splitting-thm-reform3}
\end{align}
with 
\begin{subequations}
\begin{align}
f_r & =  \kappa \left( \Delta f G + \nabla f \cdot  \nabla G \right) , \label{eq:F-splitting-thm}\\
u_{r, 0} &= u_0 - fG.
\label{eq:u0-splitting-thm}
\end{align}\end{subequations}
\label{thm:splitting}

\end{theorem}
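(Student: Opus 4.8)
The plan is to prove the theorem constructively: I will exhibit the pair $(u_s,\mathbf{q}_s)$ explicitly, verify that it lies in $L^2_{\alpha-1}(\Omega)\times V_{\alpha+1}(\mathrm{div};\Omega)$ and absorbs the entire line source, and then show that the residual pair $(u_r,\mathbf{q}_r)$ solves a mixed Poisson problem with right-hand side in $L^2(\Omega)$, so that standard theory applies. Since the BNB conditions verified in Lemma \ref{lemma:primal} deliver uniqueness as well as existence of the variational solution, it then suffices to check that the sum $(u_s+u_r,\mathbf{q}_s+\mathbf{q}_r)$ satisfies \eqref{eq:darcy}-\eqref{eq:cons}; the asserted splitting of $(u,\mathbf{q})$ follows at once.

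First I would establish that the explicit function $G$ in \eqref{eq:G} solves $-\kappa\Delta G=\delta_\Lambda$ in the weak sense. For a single segment this is the direct evaluation of the convolution $\delta_\Lambda\ast G_{3D}$ against the fundamental solution $G_{3D}$ carried out in Section \ref{sec:capturing-the-sing}; superposition then handles $\Lambda=\cup_{i=1}^n\Lambda_i$. The precise meaning of the identity is $(\kappa\nabla G,\nabla\varphi)=\langle\delta_\Lambda,\varphi\rangle$ for smooth test functions, which is legitimate because Lemma \ref{lemma:dirac} guarantees $\delta_\Lambda\in L^2_{\alpha+1}(\Omega)$, so the right-hand pairing is well defined. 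With $G$ in hand I set $u_s=fG$ and $\mathbf{q}_s=-\kappa\nabla u_s$.

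Next I would pin down the regularity of the singular pair. Invoking the weighted estimates of \cite[Section 3.2]{Gjerde2018}, namely $G\in L^2_{\alpha-1}(\Omega)$ and $\nabla G\in (L^2_\alpha(\Omega))^3$, and using $f\in C^0(\bar{\Omega})\cap H^2(\Omega)$, the products give $u_s\in L^2_{\alpha-1}(\Omega)$ and $\mathbf{q}_s\in (L^2_\alpha(\Omega))^3$. The one genuinely delicate point is the divergence: away from $\Lambda$ the function $G$ is harmonic, so $\nabla\cdot\mathbf{q}_s=-\kappa\Delta(fG)$ reduces to the lower-order terms together with a contribution concentrated on $\Lambda$ that carries an $r^{-2}$-type singularity. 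I would then verify $\nabla\cdot\mathbf{q}_s\in L^2_{\alpha+1}(\Omega)$ by passing to Frenet/cylindrical coordinates about $\Lambda$ and checking that $\int r^{2(\alpha+1)}r^{-4}\,r\,\mathrm{d}r=\int r^{2\alpha-1}\,\mathrm{d}r$ converges near $r=0$, which holds precisely for $\alpha>0$. This yields $\mathbf{q}_s\in V_{\alpha+1}(\mathrm{div};\Omega)$.

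Finally I would derive and solve the remainder system. Subtracting $(u_s,\mathbf{q}_s)$ from \eqref{eq:darcy-splitting}-\eqref{eq:bc-splitting} and using $-\kappa f\Delta G=f\delta_\Lambda$ weakly, the concentrated source cancels and one is left with data $f_r=\kappa(\Delta f\,G+2\nabla f\cdot\nabla G)$ and $u_{r,0}=u_0-fG$. I would bound $\norm{f_r}{L^2(\Omega)}$ by Cauchy--Schwarz with the weights $r^{\pm\alpha}$, controlling $\norm{\nabla f\cdot\nabla G}{L^2(\Omega)}$ through $\norm{\nabla f}{L^2_{-\alpha}(\Omega)}\norm{\nabla G}{L^2_\alpha(\Omega)}$ and invoking the embedding \eqref{eq:imbedding} to place $\nabla f\in (H^1(\Omega))^3\subset L^2_{-\alpha}(\Omega)$ for $0<\alpha<1$; hence $f_r\in L^2(\Omega)$. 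Standard mixed elliptic theory (e.g.\ \cite{braess_2007}) then provides $(u_r,\mathbf{q}_r)\in L^2(\Omega)\times H(\mathrm{div};\Omega)$ solving \eqref{eq:darcy-splitting-thm-reform1}-\eqref{eq:bc-splitting-thm-reform3}. Adding the two pairs and testing against the variational forms recovers \eqref{eq:darcy}-\eqref{eq:cons}, so by uniqueness the sum equals $(u,\mathbf{q})$. I expect the main obstacle to be the weighted divergence estimate for $\mathbf{q}_s$: the distributional identity $-\kappa\Delta(fG)=f\delta_\Lambda+(\text{regular})$ must be handled carefully, so that the concentrated part is exactly the line source absorbed into $f_r$ while the residual $r^{-2}$ behaviour is shown to sit in $L^2_{\alpha+1}(\Omega)$ rather than merely in a distributional space.
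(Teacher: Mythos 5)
Your proposal is correct and takes essentially the same route as the paper: the paper's proof of Theorem \ref{thm:splitting} simply cites Sections \ref{sec:capturing-the-sing}--\ref{sec:splitting-reg}, which construct $G = \delta_\Lambda \ast G_{3D}$ explicitly, set $u_s = fG$ and $\mathbf{q}_s = -\kappa \nabla u_s$, verify $(u_s, \mathbf{q}_s) \in L^2_{\alpha-1}(\Omega) \times V_{\alpha+1}(\mathrm{div};\Omega)$ via the weighted estimates of \cite{Gjerde2018}, and show $f_r \in L^2(\Omega)$ so that standard mixed theory gives $(u_r, \mathbf{q}_r) \in L^2(\Omega) \times H(\mathrm{div};\Omega)$---exactly your steps. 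Your additions (the BNB-uniqueness argument identifying $u_s + u_r$ with $u$, which the paper leaves implicit, and the explicit integrability check $\int r^{2\alpha-1}\,\mathrm{d}r < \infty$ for $\nabla \cdot \mathbf{q}_s$) only tighten the argument, and your $f_r$ with the factor $2\nabla f \cdot \nabla G$ is the correct one---the factor $2$ is missing in \eqref{eq:F-splitting-thm} but present in \eqref{eq:f_rsplitting}.
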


\begin{proof}
The proof is by the arguments given in Sections \ref{sec:capturing-the-sing}-\ref{sec:splitting-reg}.
\end{proof}

\section{Discretization}
\label{sec:disc}
In this section we will introduce the finite element discretization of the line source problem. We give here two different discretization methods: The first solves directly for the full solution $(u, \mathbf{q})$ via \eqref{eq:darcy}-\eqref{eq:cons}, while the second solves for the remainder pair $(u_r, \mathbf{q}_r)$ using the weak formulation of \eqref{eq:darcy-splitting-thm-reform1}-\eqref{eq:bc-splitting-thm-reform3}. Since the remainder pair are of higher regularity than the full solution, we expect this second approach to achieve improved convergence rates.

Assume, for simplicity, the domain $\Omega$ to be polygonal; $\Omega$ then readily admits a decomposition $\mathcal{T}_h$ into triangles $K$,
\begin{align*}
\bar{\Omega} = \bigcup_{K \in \mathcal{T}_h} K,
\end{align*}
where $h$ denotes the mesh size $h = \max_{K \in \mathcal{T}_h} h_K$, and $\mathcal{T}_h$ is assumed to satisfy all the requirements of a conforming mesh. We use piecewise polynomial elements of degree $k$ to approximate $u$ and $u_r$:
\begin{align*}
\mathbb{DG}_h^k & := \lbrace w_h \in L^2(\Omega) : w_h\vert_K \in P_{k-1}(K) \quad \forall \, K \in \mathcal{T}_h  \rbrace, 
\end{align*}
and the $H^{\mathrm{div}}$-conforming Raviart-Thomas elements of degree $k$ to approximate $\mathbf{q}$ and $\mathbf{q}_r$:
\begin{align*}
\mathbb{RT}_h^k & := \lbrace \mathbf{w} \in (L^2(\Omega))^3 : \mathbf{w}_h\vert_K \in P_{k-1}(K, \mathbb{R}^n) \oplus \mathbf{x} P_{k-1}(K) \quad \forall \, K \in \mathcal{T}_h  \rbrace.
\end{align*}
Here, $P_{k}$ denotes the standard space of polynomials of degree $\leq k$ in the variables $x=(X_2, X_1, X_1)$, with $k \geq 1$ integer-valued. The (standard) mixed finite element formulation of \eqref{eq:darcy}-\eqref{eq:cons} then reads the following: Find $(u_h, \mathbf{q}_h) \in  \mathbb{DG}_h^k \times \mathbb{RT}_h^k$ such that
\begin{subequations}
\begin{align}
(\kappa^{-1} \mathbf{q}_h, \mathbf{v}_h) - (\nabla \cdot \mathbf{v}_h, u_h) + (\mathbf{v}_h, u_0)_{\partial \Omega} &= 0 \label{eq:darcy-disc} \quad &\forall \,  \mathbf{v} \in \mathbb{RT}_h^k, \\
(\nabla \cdot \mathbf{q}_h, \theta_h) &= (f, \theta_h)_\Lambda \quad &\forall \, \theta \in \mathbb{DG}_h^k.
\label{eq:cons-disc}
\end{align}
\end{subequations}

It is straightforward to show that the discrete formulation is stable. Defining $V_h$ to be the kernel of $\mathbb{RT}_h^k$, i.e.
\begin{align*}
V_h^k := \lbrace v_h \in \mathbb{RT}_h^k : b(v_h, q_h) = 0 \quad \forall \, q_h \in \mathbb{DG}_h^k \rbrace,
\end{align*}
we see that coercivity of $a(\cdot, \cdot)$ on $\mathbb{RT}_h^k \times \mathbb{DG}_h^k$ holds on $V_h^k$. Moreover, it is well known for this choice of discrete spaces that $b(v,u)$ satisfies the discrete inf-sup condition for this pair of discrete spaces. It follows that the formulation is stable, and hence, that a solution pair $(u_h, \mathbf{q}_h) \in \mathbb{DG}_h^k \times \mathbb{RT}_h^k $ exists. The convergence rates, contrarily, are non-trivial to prove, as the solution belongs to the non-standard space $L^2_{\alpha-1}(\Omega) \times V_{\alpha+1}(\mathrm{div}; \Omega)$. We leave it here as an open question, and investigate it only numerically. Let us note, however, that $L^2_{\alpha+1}(\Omega) \cancel{\subset} L^1(\Omega)$. Thus, $\mathbb{RT}_h^k \, \cancel{\subset} \, V_{\alpha+1}(\mathrm{div};\Omega)$ due to the low regularity of the solution. For this reason, we will now define an alternative solution strategy. 

Assuming the assumptions of Theorem \ref{thm:splitting} hold, one can solve for $(u, \mathbf{q})$ via the higher regularity remainder terms: Find  $(u_{r,h}, \mathbf{q}_{r,h}) \in \mathbb{DG}_h^k \times \mathbb{RT}_h^k$ such that
\begin{subequations}
\begin{align}
(k^{-1} \mathbf{q}_{r,h}, \mathbf{v}_{r,h}) - (\nabla \cdot \mathbf{v}_{r,h}, u_{r,h}) + (\mathbf{v}_{r,h}, u_{r, 0})_{\partial \Omega} &= 0 &&  \forall \, \mathbf{v}_{r,h} \in \mathbb{RT}_h^k, \label{eq:darcy-disc-reform}\\
(\nabla \cdot \mathbf{q}_{r,h}, \theta_{r,h}) &= (f_r, \theta_{r,h}) &&  \forall \, \theta_{r,h} \in \mathbb{DG}_h^k,
\label{eq:cons-disc-reform}
\end{align}
\end{subequations}
where the right-hand side $f_r$ and boundary data $u_{r, 0}$ are given by \eqref{eq:F-splitting-thm}  and \eqref{eq:u0-splitting-thm}, respectively. We will refer to this method as the singularity removal based mixed finite element method for the line source problem. As $f_r \in L^2(\Omega)$, the stability and convergence properties of this formulation follow from the standard theory of mixed finite element method. For later discussion, let us note the results. Given $(u_r, \mathbf{q}_r) \in H^m(\Omega) \times (H^{k+1}(\Omega))^3$ and $(u_{r,h}, \mathbf{q}_{r,h}) \in \mathbb{DG}_h^k \times \mathbb{RT}_h^k$ solving \eqref{eq:darcy-disc-reform}-\eqref{eq:cons-disc-reform}, one has \cite{brenner}: 
\begin{align}
\norm{u_r-u_{r,h}}{L^2(\Omega)} + 
\norm{\mathbf{q}_r -\mathbf{q}_{r,h}}{H(\mathrm{div}; \Omega)} \leq C h^k \left(\norm{u_r}{H^{k}(\Omega)}  + \norm{\mathbf{q}_r}{(H^{k+1}(\Omega))^3} \right). \label{eq:error-rates}
\end{align}
For lowest-order elements $\mathbb{DG}_h^0 \times \mathbb{RT}_h^0$, one further has
\begin{align}
\norm{u_r-u_{r,h}}{L^2(\Omega)} + 
\norm{\mathbf{q}_r -\mathbf{q}_{r,h}}{H(\mathrm{div}; \Omega)} \leq C h \left(\norm{u_r}{H^{1}(\Omega)}  + \norm{\mathbf{q}_{r,h}}{(H^{1}(\Omega))^3} \right). \label{eq:error-rates-lowest-order}
\end{align}

\section{Numerical Results}
\label{sec:numerics}
In this section, we will test the approximation properties of the two discretization methods given in the last section, i.e., the (standard) mixed finite element method \eqref{eq:darcy-disc}-\eqref{eq:cons-disc} and the singularity removal based mixed finite element method \eqref{eq:darcy-disc-reform}-\eqref{eq:cons-disc-reform}. The section will proceed as follows. In Section \ref{sec:numerics-weighted}, we test the convergence properties of the standard method in weighted and un-weighted norms. In Section \ref{sec:numerics-reformulated}, we then proceed to test the convergence properties of the singular removal based method. As this formulation solves for the higher regularity remainder terms in the solution splitting \eqref{eq:u-existence}, this is expected to yield improved convergence rates. Finally, we illustrate in Section \ref{sec:brain} the effectiveness of this method in handling datasets with a large number of line segments, by using it to treat a dataset for the vascular system of a rat tumour.

\subsection{Convergence Test for the Standard Mixed Finite Element Method}
\label{sec:numerics-weighted}
The purpose of this section is to numerically investigate the approximation properties of the straightforward finite element method \eqref{eq:darcy-disc}-\eqref{eq:cons-disc} using weighted norms. To this end, we test against the manufactured solution
\begin{align}
u = -\frac{1}{2\pi} \left( 2 \ln(r) -  \frac{1}{2} r^2 (1 - \ln(r)) \right) 
\label{eq:exp1}
\end{align}
that solves the line source problem \eqref{eq:darcy}-\eqref{eq:cons} with line source intensity $f=2$, flux $\mathbf{q}:=-\nabla u$ and Dirichlet boundary data as in \eqref{eq:exp1}. As this problem is invariant with respect to $z$, it is sufficient to solve it on the unit square domain $\Omega=(0,1)^2$. The right-hand side then consists of a point source, assumed to be centred in the domain: $\Lambda = (0.5, 0.5)$. 

\begin{table}
\centering
\caption{Convergence rates obtained using the (standard) mixed finite element method, as described by \eqref{eq:darcy-disc}-\eqref{eq:cons-disc}, measured in the standard ($\alpha=0$) and weighted $L^2_\alpha$-norms. The results are given for lowest-order $\mathbb{RT}_1 \times \mathbb{DG}_1$ elements in \ref{tab:conv-table-p1-exp1}-\ref{tab:conv-table-q1-exp1}, $\mathbb{RT}_2 \times \mathbb{DG}_2$ elements in \ref{tab:conv-table-p2-exp1}-\ref{tab:conv-table-q2-exp1}, and $\mathbb{RT}_3 \times \mathbb{DG}_3$ elements in \ref{tab:conv-table-p3-exp1}-\ref{tab:conv-table-q3-exp1}.}
\begin{subtable}{0.54\textwidth}
\centering
\centering
\caption{$\norm{u-u_h}{L^2_\alpha(\Omega)}$ with $k=1$}
\footnotesize
\begin{tabular}{lllll}
\toprule
$h$ & $\alpha=0$ & $\alpha=0.25$ & $\alpha=0.5$ & $\alpha=0.75$ \\
\midrule
1/16  &    1.2e-1 &       2.5e-2 &      1.6e-2 &       1.1e-2 \\
1/32  &    5.7e-2 &       1.4e-2 &      8.4e-3 &       5.6e-3 \\
1/64  &    2.7e-2 &       7.1e-3 &      4.2e-3 &       2.8e-3 \\
1/128 &    1.3e-2 &       3.6e-3 &      2.1e-3 &       1.4e-3 \\ \midrule \midrule
$p$ &       1.07 &          0.96 &         0.99 &          1.00 \\
\bottomrule
\end{tabular}

\label{tab:conv-table-p1-exp1}
\end{subtable}
\begin{subtable}{0.45\textwidth}
\centering
\caption{$\norm{\mathbf{q}-\mathbf{q}_h}{L^2_{\alpha}(\Omega)}$ with $k=1$}
\footnotesize
\begin{tabular}{llll}
\toprule
$h$&  $\alpha=0.25$ & $\alpha=0.5$ & $\alpha=0.75$ \\
\midrule
1/16  &       5.2e-1 &      2.6e-1 &       1.5e-1 \\
1/32  &       4.3e-1 &      1.9e-1 &       8.9e-2 \\
1/64  &       3.7e-1 &      1.3e-1 &       5.4e-2 \\
1/128 &       3.1e-1 &      9.5e-2 &       3.3e-2 \\ \midrule \midrule
$p$ &       0.25   &         0.50 &          0.73 \\
\bottomrule
\end{tabular}

\label{tab:conv-table-q1-exp1}
\end{subtable}
\vspace{2em}

\begin{subtable}{0.54\textwidth}
\centering
\caption{$\norm{u-u_h}{L^2_\alpha(\Omega)}$ with $k=2$}
\footnotesize
\begin{tabular}{lllll}
\toprule
$h$ & $\alpha=0$ & $\alpha=0.25$ & $\alpha=0.5$ & $\alpha=0.75$ \\
\midrule
1/16  &    4.5e-2 &       8.4e-3 &      3.8e-3 &       1.8e-3 \\
1/32  &    2.1e-2 &       3.6e-3 &      1.3e-3 &       5.6e-4 \\
1/64  &    9.4e-3 &       1.5e-3 &      4.8e-4 &       1.7e-4 \\
1/128 &    4.3e-3 &       6.4e-4 &      1.7e-4 &       5.0e-5 \\ \midrule \midrule
$p$ &       1.15 &          1.24 &         1.50 &          1.73 \\
\bottomrule
\end{tabular}

\label{tab:conv-table-p2-exp1}
\end{subtable}
\begin{subtable}{0.45\textwidth}
\centering
\caption{$\norm{\mathbf{q}-\mathbf{q}_h}{L^2_{\alpha}(\Omega)}$ with $k=2$.}
\footnotesize
\begin{tabular}{llll}
\toprule
$h$ & $\alpha=0.25$ & $\alpha=0.5$ & $\alpha=0.75$ \\
\midrule
1/16  &       5.5e-1 &      2.5e-1 &       1.2e-1 \\
1/32  &       4.7e-1 &      1.8e-1 &       7.0e-2 \\
1/64  &       3.9e-1 &      1.3e-1 &       4.2e-2 \\
1/128 &       3.3e-1 &      8.9e-2 &       2.5e-2 \\\midrule \midrule
$p$ &         0.25 &         0.50 &          0.75 \\
\bottomrule
\end{tabular}

\label{tab:conv-table-q2-exp1}
\end{subtable}
\vspace{2em}

\begin{subtable}{0.54\textwidth}
\centering
\caption{$\norm{u-u_h}{L^2_\alpha(\Omega)}$ with $k=3$}
\footnotesize
\begin{tabular}{lllll}
\toprule
$h$ & $\alpha=0$ & $\alpha=0.25$ & $\alpha=0.5$ & $\alpha=0.75$ \\
\midrule
1/16  &    2.4e-2 &       5.3e-3 &      2.3e-3 &       1.0e-3 \\
1/32  &    1.1e-2 &       2.2e-3 &      8.1e-4 &       3.1e-4 \\
1/64  &    5.0e-3 &       9.4e-4 &      2.9e-4 &       9.2e-5 \\
1/128 &    2.3e-3 &       4.0e-4 &      1.0e-4 &       2.7e-5 \\\midrule \midrule
$p$ &       1.13 &          1.25 &         1.50 &          1.75 \\
\bottomrule
\end{tabular}

\label{tab:conv-table-p3-exp1}
\end{subtable}
\begin{subtable}{0.45\textwidth}
\centering
\caption{$\norm{\mathbf{q}-\mathbf{q}_h}{L^2_{\alpha}(\Omega)}$ with $k=3$.}
\footnotesize
\begin{tabular}{llll}
\toprule
$h$ &  $\alpha=0.25$ & $\alpha=0.5$ & $\alpha=0.75$ \\
\midrule
1/16  &       6.6e-1 &      2.9e-1 &       1.3e-1 \\
1/32  &       5.5e-1 &      2.1e-1 &       7.8e-2 \\
1/64  &       4.7e-1 &      1.5e-1 &       4.6e-2 \\
1/128 &       3.9e-1 &      1.0e-1 &       2.8e-2 \\ \midrule \midrule
$p$ &         0.25 &         0.50 &          0.75 \\
\bottomrule
\end{tabular}

\label{tab:conv-table-q3-exp1}
\end{subtable}

\label{tab:exp1-conv-order}
\end{table}

Table \ref{tab:exp1-conv-order} shows the errors and convergence rates for obtained using the (standard) mixed finite element method \eqref{eq:darcy-disc}-\eqref{eq:cons-disc} on this test problem. The errors are given for $\norm{u-u_h}{}$ and $\norm{\mathbf{q}-\mathbf{q}_h}{}$ in the $L^2_\alpha$-norm for different weights $\alpha$. Note that the error $\norm{\mathbf{q}-\mathbf{q}_h}{}$ is not given in the  $V^1_{\alpha+1}(\mathrm{div};\Omega)$-norm as this quantity is not well defined. To be more precise, would require computing $\norm{\nabla \cdot \mathbf{q}-\nabla \cdot \mathbf{q}_h}{L^2_{\alpha+1}(\Omega)}$ Convergence was tested for different element degrees $k$, with lowest-order $k=1$ given in Tables \ref{tab:conv-table-p1-exp1} and \ref{tab:conv-table-q1-exp1}, $k=2$ given in Tables \ref{tab:conv-table-p2-exp1} and \ref{tab:conv-table-q2-exp1}, and $k=3$ in Tables \ref{tab:conv-table-p3-exp1} and \ref{tab:conv-table-q3-exp1}.

Optimal convergence is observed only in Table \ref{tab:conv-table-p1-exp1}. The convergence is here found to be of order $s=1$ independently of the weight $\alpha$, i.e.
\begin{align}
\norm{u-u_{h}}{L^2_\alpha(\Omega)} \leq C h^1 \norm{u}{H^1_\alpha(\Omega)}. 
\end{align}
To understand this result, let us note that the error rate in \eqref{eq:error-rates} requires $u \in H^1(\Omega)$. In this case, one has $u \in H^1_\alpha(\Omega)$ for any $\alpha>0$; formally, this can be interpreted as $u$ barely evading $H^1(\Omega)$. Previously, the approximation of this type of problem has been studied in \cite{apel2011} using the conformal finite element method with $\mathbb{P}^1$ elements. Therein, the convergence rate was shown to be of order $h^{1-\epsilon}$ for any $\epsilon>0$. Applying a similar logic to the mixed finite element method, we formally expect $\norm{u-u_h}{L^2(\Omega)}$ to converge with order $s=1-\epsilon$ for arbitrarily small $\epsilon>0$. As this $\epsilon$ is allowed arbitrarily small, the $\epsilon$ loss of convergence need not be apparent in the numerical test case. 

The remaining convergence rates, conversely, were all found to scale with the error norm weight. The following relationship was observed:
\begin{align}
\norm{u-u_{h}}{L^2_\alpha(\Omega)} \leq C h^{\alpha+1} \norm{u}{L^2_\alpha(\Omega)} 
\end{align}
for $k\in \{2,3\}$ and
\begin{align}
\norm{\mathbf{q}-\mathbf{q}_{h}}{L^2_\alpha(\Omega)} \leq C h^\alpha \norm{\mathbf{q}}{L^2_\alpha(\Omega)} 
\end{align}
for $k\in \{1,2,3\}$. The convergence rate was not found to increase with the polynomial degree $k$. This is a natural result as the solutions do not have enough regularity to benefit from higher-order elements. To increase the convergence order, one could instead perform a grading of the mesh, as was proposed in e.g. \cite{Ding2001, apel2011}. We omit to do that here as we are interested in solving cases where there are a great number of line sources. It would then be computationally infeasible to perform a mesh refinement around each line segment. 

\subsection{Convergence Test for Reformulated Finite Element Method}

\label{sec:numerics-reformulated}
\begin{figure}
      \begin{subfigure}{0.32\textwidth}
         \includegraphics[height=1.35in]{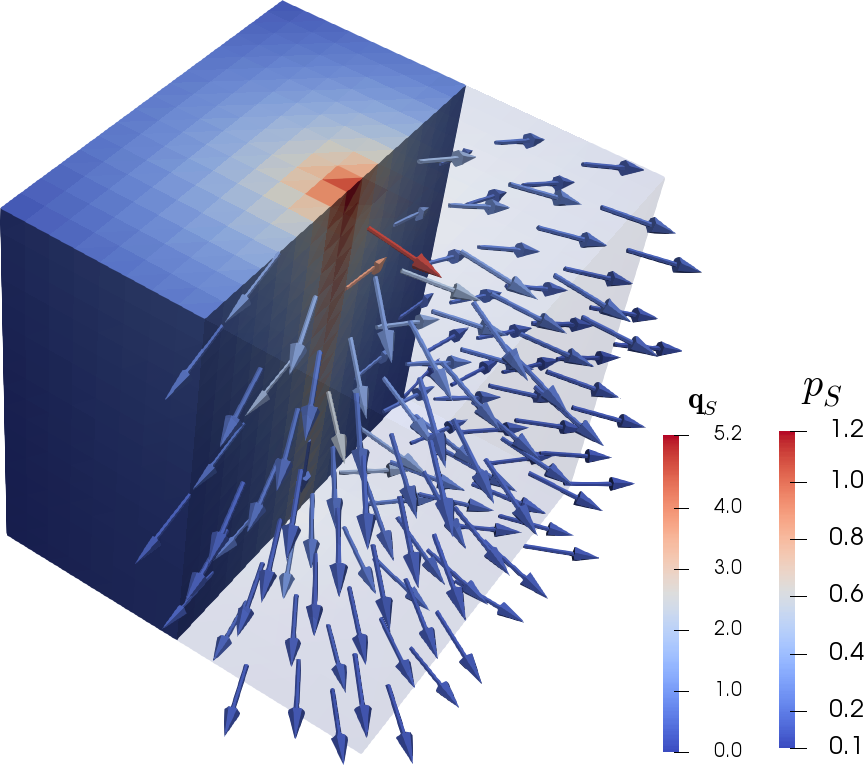}
        \caption{ Singular solution components $u_s$ and $\mathbf{q}_s$.} 
        \label{fig:u1_s}
      \end{subfigure}
      \begin{subfigure}{0.32\textwidth}
        \includegraphics[height=1.35in]{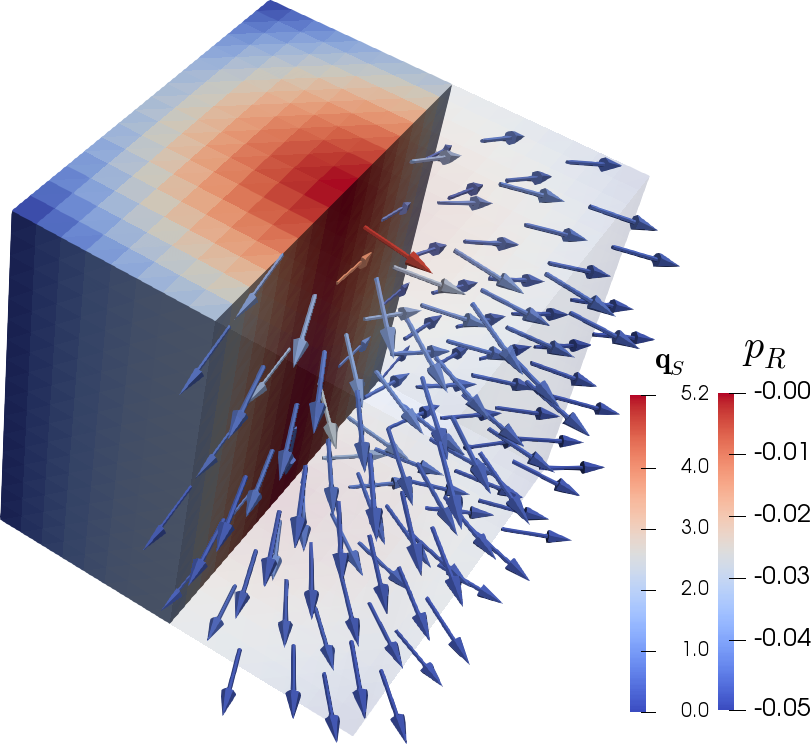}
\caption{ Remainder components $u_r$ and $\mathbf{q}_r$.}
        \label{fig:u1_r}
      \end{subfigure}
    \begin{subfigure}{0.32\textwidth}
       \includegraphics[height=1.35in]{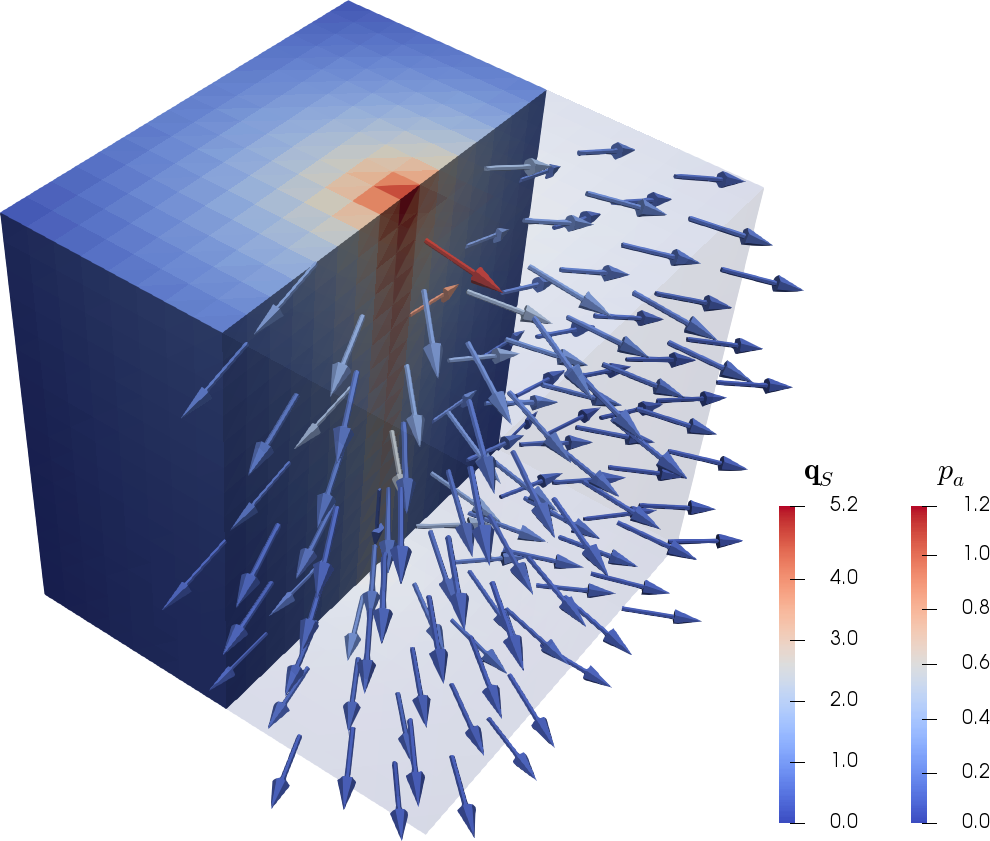}
       \caption{Total solutions $u = u_s + u_r$ and \\ $\mathbf{q} = \mathbf{q}_s + \mathbf{q}_r$.}
      \label{fig:u1}
    \end{subfigure}
  \label{fig:ABC}
    \caption{The full solutions $u = u_s + u_r$ and $\mathbf{q} = \mathbf{q}_s + \mathbf{q}_r$ and their components to the line source problem \eqref{eq:darcy}-\eqref{eq:cons}, solved on the unit domain with line source intensity $f(z)=z^2+1$ and Dirichlet boundary data as in  \eqref{eq:exp1}.}
    \label{fig:infline-decomposition}
\end{figure}

The purpose of this section is to verify the error estimates for the singularity removal based mixed finite element method \eqref{eq:darcy-disc-reform}-\eqref{eq:cons-disc-reform}. To this end, we consider the unit cube domain $\Omega=(0,1)^3$ with a line cutting vertically through its midpoint,
\begin{align}
\Lambda= \lbrace (0.5,0.5,z) : z \in (0, 1) \rbrace.
\end{align}
We prescribe a manufactured solution
\begin{align}
u = -\frac{1}{2\pi} \left( \left( z^2+1 \right) \ln(r) -  \frac{1}{2} r^2 (1 - \ln(r)) \right) 
\label{eq:exp2}
\end{align}
that solves the line source problem \eqref{eq:darcy}-\eqref{eq:cons} with line source intensity $f= z^2+1$, flux $\mathbf{q}:=-\nabla u$ and Dirichlet boundary conditions given by \eqref{eq:exp1}. The singularity removal based method then solves \eqref{eq:darcy-disc-reform}-\eqref{eq:cons-disc-reform} with problem parameters
\begin{align}
f_r = \frac{1}{\pi} \ln(r), \quad u_{r,0} = u_0 - \frac{z^2+1}{2\pi}\ln(r)
\end{align}
for the remainder pair
\begin{align}
u_r = \frac{1}{4\pi} r^2 (1 - \ln(r)), \quad \mathbf{q}_r = -\nabla u_r.
\end{align}
The full solutions $u$ and $\mathbf{q}$, along with the splitting terms, are shown in Figure \ref{fig:infline-decomposition}.

\begin{table}
\caption{Convergence rates obtained when solving for the regular terms $u_r$ and $\mathbf{q}_r$ with the mixed finite element method, as described by \eqref{eq:darcy-disc-reform}-\eqref{eq:cons-disc-reform}. Optimal order convergence is seen in Table \ref{tab:exp2-k0} using $\mathbb{DG}_k \times \mathbb{RT}_k$ elements with degree $k=1$.}
 \begin{subtable}{0.99\textwidth}
 \centering
  \caption{$k=1$}
    \small
\begin{tabular}{lll}
\toprule
{$h$ } & $\Vert u_r-u_{r,h} \Vert_{L^2(\Omega)}$ & $\Vert \mathbf{q}_r-\mathbf{q}_{r,h} \Vert_{H(\mathrm{div}; \Omega)}$ \\
\midrule
$1/2$ &                         9.9e-3 &                                            2.0e-1 \\
$1/4$ &                         4.5e-3 &                                            9.9e-2 \\
$1/8$ &                         2.2e-3 &                                            5.0e-2  \\ \midrule \midrule
$s$   &                             1.0 &                                                1.0 \\
\bottomrule
\end{tabular}

\label{tab:exp2-k0}
 \end{subtable}
 
  \begin{subtable}{0.99\textwidth}
  \centering
  \caption{$k=2$}
  \small
\begin{tabular}{lll}
\toprule
{$h$ } & $\Vert u_r-u_{r,h} \Vert_{L^2(\Omega)}$ & $\Vert \mathbf{q}_r-\mathbf{q}_{r,h} \Vert_{H(\mathrm{div}; \Omega)}$ \\
\midrule
$1/2$ &                         5.1e-3 &                                            2.0e-1 \\
$1/4$ &                         8.3e-4 &                                            9.9e-2 \\
$1/8$ &                         1.5e-4 &                                            5.0e-2 \\\midrule \midrule
$s$   &                             2.4 &                                                1.0 \\
\bottomrule
\end{tabular}

\label{tab:exp2-k1}
 \end{subtable}
 
 \label{tab:reformulation}
\end{table}

Table \ref{tab:reformulation} shows the convergence rates for $\norm{u_r-u_{r,h}}{}$ and $\norm{\mathbf{q}_r-\mathbf{q}_{r,h}}{}$ in different norms. Only standard (un-weighed) norms are used as the remainder terms $u_r$ and $\mathbf{q}_r$ are not singular. Convergence is tested using element degrees $k \in \{ 1,2,3 \}$. As the remainder terms enjoy improved regularity compared to the full solution, we here observe a significant improvement in the convergence rate. To be more precise, we observe here the convergence rates 
\begin{align}
\norm{u_r-u_{r,h}}{L^2(\Omega)} & \leq C h^s \norm{u_r}{L^2(\Omega)} && \quad \text{ for } 1 \leq s \leq 2, \\
\norm{\mathbf{q}_r-\mathbf{q}_{r,h}}{L^2(\Omega)} &\leq C h^s \norm{\mathbf{q}_r}{L^2(\Omega)} &&\quad \text{ for } s=1. \label{eq:error-results-exp2}
\end{align}
Thus, the approximation converges optimally for lowest-order elements.
As $(u_r, \mathbf{q}_r) \in H^1(\Omega) \times (H^1(\Omega))^3$, this is in agreement with the error rates given by \eqref{eq:error-rates-lowest-order}. For the flux $\mathbf{q}_h$, a further increase in the element degree is not seen to increase the convergence. This is to be expected as $\mathbf{q}$ does not belong to $(H^3(\Omega))^3$; thus, 
by \eqref{eq:error-rates}, it is not regular enough to benefit from this increase in the polynomial degree.

\subsection{Convergence Test with Non-Trivial Geometry}
In the previous section, the singularity removal based mixed finite element method was found to significantly improve the approximation properties of solutions to \eqref{eq:darcy}-\eqref{eq:cons}. In this section, we will test the capabilities of this method when the line sources are concentrated on a non-trivial geometry. To do so, we consider a dataset describing the vascular network in the dorsal skin flap of a rat carcinoma \cite{secomb}. The skin flap preparation itself has overall dimension of 550 x 520 x 230 $\mu \textrm{m}^3$. 106 microvessels were identified within the skin flap, with diameters ranging between 5.0 and 32.2 $\mu$m and lengths ranging between 16.0 and 210.1 $\mu$m. Due to scale disparities between these values, we therefore consider the skin flap to a 3D domain $\Omega$ and the vascular network to a 1D graph $\Lambda$, as is illustrated in Figure \ref{fig:rat-graph}. 

As test case, we choose the manufactured solutions 
\begin{align}
u_a = \sum_{i=1}^{106} f_i G_i + \frac{1}{4\pi} \left( {r_{\mathbf{b}_i}}-{r_{\mathbf{a}_i}}\right), \quad \mathbf{q}_a = -\nabla u_a\label{eq:psol_rat}
\end{align}
with $f_i = 1 + \alpha_i \pmb{\tau}_i \cdot (\mathbf{x}-\mathbf{a}_i)$ for some $\alpha_i \in \mathbb{R}$. As in \ref{sec:splitting-thm}, this solution can be split into higher and lower-regularity terms by defining $(u_r,\mathbf{q}_r)$ as the pair solving 
\begin{subequations}
\begin{align}
\mathbf{q}_r + \nabla u_r &= 0 & \text{ in } \Omega, \\
\nabla \cdot \mathbf{q}_r &= f_r & \text{ in } \Omega, \\
u_r &= u_{r,0} & \text{ on } \partial \Omega,
\end{align} \label{eq:rat-test-problem}
\end{subequations}
with 
\begin{subequations}
\begin{align}
f_r = \frac{1}{2\pi}  \sum_{i=1}^{106} \left( \frac{1}{r_{\mathbf{a}_i}}-\frac{1}{r_{\mathbf{b}_i}}\right), \quad u_{r, 0} = \frac{1}{4\pi} \sum_{i=1}^{106} \left( {r_{\mathbf{b}_i}}-{r_{\mathbf{a}_i}}\right). \label{eq:rat-analytic}
\end{align}
\end{subequations}
The analytic solutions for the remainders are then given by 
\begin{align}
u_{r,a} = \frac{1}{4\pi} \sum_{i=1}^{106} \left( {r_{\mathbf{b}_i}}-{r_{\mathbf{a}_i}}\right), \quad \mathbf{q}_{r,a} = -\nabla u_{r,a}.
\end{align}

\begin{table}[]
\centering
\caption{Error and convergence rates obtained when solving for the regular terms $u_r$ and $\mathbf{q}_r$ on the test problem used in Section \ref{sec:brain}.}
\label{tab:rat-error}
\begin{tabular}{lrr}
\hline
$h$ & $\norm{u_{r,a}-u_{r,h}}{L^2(\Omega)}$ & $\norm{\mathbf{q}_{r,a}-\mathbf{q}_{r,h}}{L^2(\Omega)}$  \\ \hline
$1/2$ & 5.55e-4 & 4.50e-3 \\
$1/4$ & 2.87e-4 & 2.28e-3 \\
$1/8$ &1.47e-4 & 1.12e-3 \\
$1/16$ & 7.37e-5 & 6.17e-4 \\
\hline \hline 
$s$ & 1.0 &  1.0 \\
\hline
\end{tabular}
\end{table}

Figures \ref{fig:rat-pressure} and \ref{fig:rat-flux} show the pressure profile and flux solutions, respectively.  Table \ref{tab:rat-error} lists the error rates and convergence rates obtained from solving this test problem with the singularity removal based mixed finite element method with $\mathbb{DG}_1 \times \mathbb{RT}_1$ elements. The convergence rates for $u_{r,h}$ and $\mathbf{q}_{r,h}$ are observed to be of optimal order. Moreover, convergence is obtained using coarse meshes $h = \{ \sfrac{1}{2}, \sfrac{1}{4}, \sfrac{1}{8}, \sfrac{1}{16}\}$. Convergence can evidently be achieved independently of the length scale of $\Lambda$.

\label{sec:brain}
\begin{figure}
\centering
\begin{subfigure}{0.99\textwidth}
\centering
         \includegraphics[height=2.25in]{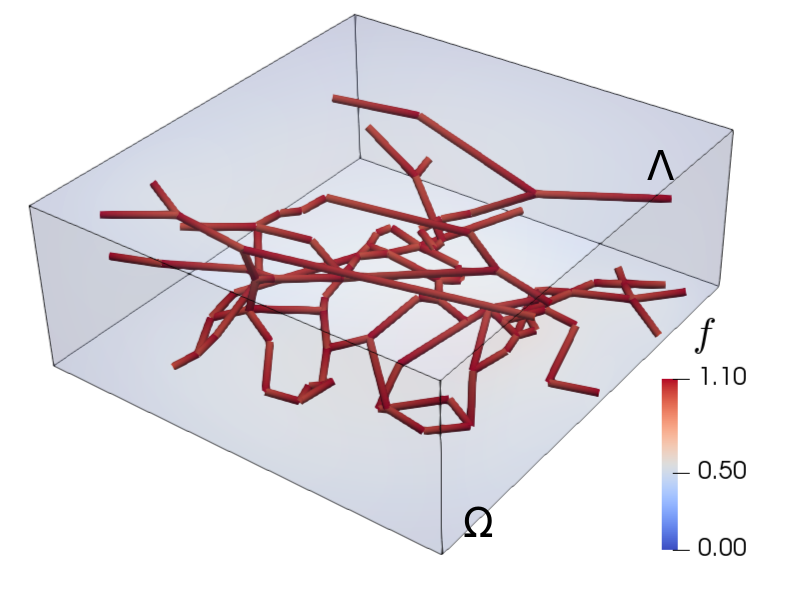}
   \caption{The (3D) simulation domain $\Omega$, (1D) graph $\Lambda$ representing the vascular network of a rat tumour, and line source intensity $f$.}
   \label{fig:rat-graph}
      \end{subfigure}
      
      \begin{subfigure}{0.99\textwidth}
      \centering
                  \includegraphics[height=2.25in]{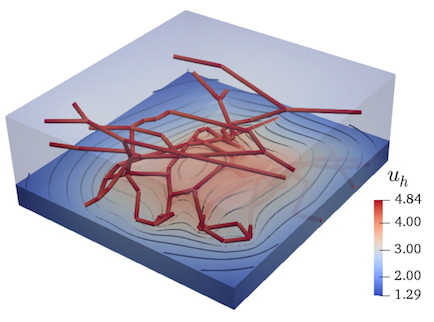}
\caption{The discretized reconstructed total pressure $u_h$.}
   \label{fig:rat-pressure}
      \end{subfigure}
     
\begin{subfigure}{0.99\textwidth}
\centering
         \includegraphics[height=2.5in]{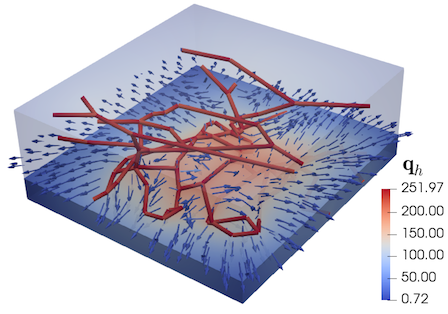}
  \caption{The discretized reconstruction of total flux $\mathbf{q}_h$.}
     \label{fig:rat-flux}
      \end{subfigure}

        \caption{The pressure \ref{fig:rat-pressure} and flux \ref{fig:rat-flux} solutions obtained when solving \eqref{eq:darcy-strong}-\eqref{eq:bc-strong} using the singularity removal based mixed finite element on a problem with non-trivial geometry $\Lambda$.}
        \label{fig:brain-illustration}
\end{figure}

\bibliographystyle{siamplain}

\end{document}